\definecolor{darkred}{RGB}{100,0,0}
\definecolor{darkgreen}{RGB}{0,100,0}
\definecolor{darkblue}{RGB}{0,0,150}
\definecolor{ccol}{RGB}{20, 143, 119 }
\def\H{{\rm H}}
\def\core{{\rm C}}
\def\ball{\mathsf{B}}
\def\d{{\rm d}}
\def\deg{{\rm deg}}
\def\opt{{\rm opt}}
\def\({\left(}
\def\){\right)}
\newtheorem{thm}{Theorem}[section]
\newtheorem{prp}[thm]{Proposition}
\newtheorem{lem}[thm]{Lemma}
\theoremstyle{remark}
\newtheorem{rem}[thm]{Remark}
\def\beq{\begin{equation}} 
\def\eeq{\end{equation}}
\def\beqn{\begin{eqnarray*}}
\def\eeqn{\end{eqnarray*}}
\def\Bitem{\begin{itemize}\setlength{\itemsep}{.2in}}
\def\bitem{\begin{itemize}\setlength{\itemsep}{.05in}}
\def\eitem{\end{itemize}}
\def\Benum{\begin{enumerate}\setlength{\itemsep}{.2in}}
\def\benum{\begin{enumerate}\setlength{\itemsep}{.05in}}
\def\eenum{\end{enumerate}}
\def\bmult{\begin{multline*}}
\def\emult{\end{multline*}}
\def\bcenter{\begin{center}}
\def\ecenter{\end{center}}
\def\bframe{\begin{frame}}
\def\eframe{\end{frame}}
\newcommand{\thmref}[1]{Theorem~\ref{thm:#1}}
\newcommand{\prpref}[1]{Proposition~\ref{prp:#1}}
\newcommand{\lemref}[1]{Lemma~\ref{lem:#1}}
\newcommand{\secref}[1]{Section~\ref{sec:#1}}
\newcommand{\figref}[1]{Figure~\ref{fig:#1}}
\newcommand{\remref}[1]{Remark~\ref{rem:#1}}
\DeclareMathOperator{\diam}{diam}
\def\cB{\mathcal{B}}
\def\cG{\mathcal{G}}
\def\cS{\mathcal{S}}
\def\cX{\mathcal{X}}
\def\bbN{\mathbb{N}}
\def\bbR{\mathbb{R}}
\newcommand{\inner}[2]{\langle #1, #2 \rangle}
\def\1{\mathbbm{1}}
\def\ve{\varepsilon}
\def\ind{\mathbbm{1}}
\newcommand{\Card}{\operatorname{Card}}
\begin{document}
\thispagestyle{empty}

\title{The Coreness and H-Index \\ of Random Geometric Graphs}

\author{
	Eddie Aamari%
	\footnote{D\'ept. de Math\'ematiques et Applications,
       Ecole Normale Sup\'erieure, Université PSL, CNRS 
---       Paris, France (\url{https://www.math.ens.psl.eu/\~eaamari/})}
	\and
	Ery Arias-Castro%
	\footnote{Dept. of Mathematics \& Halıcıoğlu Data Science Institute, 
       U. of California, San Diego
--- La Jolla, U.S.A. (\url{https://math.ucsd.edu/\~eariasca/})}
	\and
	Cl\'ement Berenfeld%
	\footnote{Institut für Mathematik, U. Potsdam
--- Potsdam, Germany (\url{https://cberenfeld.github.io/})}
}

\date{}
\maketitle

\begin{abstract}
In network analysis, a measure of node centrality provides a scale indicating how central a node is within a network. The \emph{coreness} is a popular notion of centrality that accounts for the maximal smallest degree of a subgraph containing a given node. In this paper, we study the coreness of random geometric graphs and show that, with an increasing number of nodes and properly chosen connectivity radius, the coreness converges to a new object, that we call the \emph{continuum coreness}. In the process, we show that other popular notions of centrality measures, namely the H-index and its iterates, also converge under the same setting to new limiting objects.
\end{abstract}

\maketitle

\section{Introduction} \label{sec:intro}

In network analysis, notions of {\em node centrality} provide a way to quantify how central or important a node is within the network. 
Quite a few notions have been proposed. These include the degree, which is perhaps the most obvious one; the H-index proposed by \citet{hirsch2005index} to ``index to quantify an individual's scientific research output"; and the coreness or shell index proposed for the analysis of social networks \citep{seidman1983network}. Other notions have been proposed, including some based on graph distances \citep{freeman1978centrality} or on (shortest-)path counting~\citep{freeman1977set}; and still others that rely on some spectral properties of the graph \citep{katz1953new, bonacich1972factoring, page1999pagerank, kleinberg1999hubs}.
Notions of centrality are surveyed in~\citep{freeman1978centrality, borgatti2006graph}, and in the reference book~\citep[Ch 4]{kolaczyk2009statistical}.

\subsection{Contribution}

The coreness of a node $i$ in a graph $G$ is defined as the maximal minimal degree of a subgraph $H \subset G$ that contains $i$. That is,
\begin{align*}
\max\left\{ \ell ~\middle|~
\text{there is a subgraph $H$ of $G$ with $i \in H$ and $\min_{j \in H} \deg_H(j) \geq \ell$}
\right\}.
\label{eq:coreness-subgraphs}
\end{align*}
The coreness is a global quantity, in the sense that it depends on the whole geometry of $G$ and not on a small neighborhood of $i$. 
This global sensitivity explains its popularity as a notion of centrality, as it can be seen as  a robustified version of the degree. 
On the other hand, this global nature makes the study of its behavior difficult, especially for random graphs.

This work provides a complete description at first order
of the coreness of random geometric graphs in the large-size limit.
Along the way, we also study the properties of the limiting objects that arise through the study.
Namely, since the asymptotic study of the coreness builds upon its reformulation as iterates of the H-index \citep{lu2016h},
we also provide a full fledged study of the latter.
In a nutshell, we find that the finite iterates of the H-index, converge either to the underlying density in the regime where the connectivity radius goes to zero, or to new objects that we call \emph{continuum H-indices} if the connectivity radius is fixed.
The study of the coreness is substantially more involved, but yields even more interesting findings, as the limiting objects differ from the underlying density even in the regime where the connectivity radius tends to zero.

Our contribution is
in line with the body of work studying asymptotic global properties of random geometric graphs such as, for example, the connection between the discrete and continuous notions of Cheeger constant~\citep{trillos2016consistency, arias2012normalized, muller2020optimal}, 
the domination number \citep{bonato2015}, and other percolation-related properties~\citep{Balister2008}. 

\subsection{Setting}\label{sec:setting}
We consider a multivariate setting where 
\beq\label{setting}
\text{$\cX_n = \{X_1, \dots, X_n\}$ is an i.i.d.~sample from a uniformly continuous density $f$ on $\bbR^d$.} 
\eeq
We denote by $P$ the law of $X_i$ and by $P_n$ the empirical distribution of the sample. The bridge between point clouds and graphs is the construction of a neighborhood graph.  More specifically, for an arbitrary set of distinct points, $x_1, \dots, x_k \in \bbR^d$ and a radius $r > 0$, let $\cG_r(\{x_1, \dots, x_k\})$ denote the graph with node set $V = \{1, \dots, k\}$ and edge set $E = \{(i, j) : \|x_i - x_j\| \le r\}$, where $\|\cdot\|$ denotes the Euclidean norm.  Note that the resulting graph is undirected.
Although it is customary to weigh the edges by the corresponding pairwise Euclidean distances --- meaning that an edge $(i,j)$ has weight $\|x_i -x_j\|$ --- we choose to focus on the purely combinatorial degree-based properties of the graph, so that it is sufficient to work with the unweighted graph.
When the graph is built on a random sample of points, it is sometimes called a {\em random geometric graph} \citep{penrose2003random}. The connectivity radius may depend on the sample, although this dependency will be left implicit for the most part.

Everywhere, $\ball(x,r)$ will denote the closed ball centered at $x$ and of radius $r$.
For a measurable set $A$, $|A|$ will denote its volume. In particular, we will let $\omega$ denote the volume of the unit ball, so that $|\ball(x, r)| = \omega r^d$ for all $x \in \bbR^d$ and $r\geq 0$.
We will let 
\beq\label{N}
N := n \omega r^d,
\eeq
which as we shall see, will arise multiple times as a normalization factor.

\subsection{Outline}

With the notation just introduced, we study the large-sample ($n\to\infty$) limit of the centrality of $x$ in the random neighborhood graph $\cG_r(x, \cX_n) := \cG_r( \{x\} \cup \cX_n)$, where the sample $\cX_n$ is generated as in \eqref{setting}.
More specifically, we focus on the degree $\deg_r(x, \cX_n)$; on the $k$th iterate of the H-index $\H_r^k(x, \cX_n)$; and on the coreness $\core_r(x, \cX_n)$.
See \figref{diagram_recap} for a compact summary of the main results that we derive in the paper.

\begin{figure}[h!]
\centering
        \begin{tikzpicture}[on grid, sloped] 
            \node[draw=none] (hkrn) {$\frac1{N} \H^k_r(x,\cX_n)$};
            \node[draw=none] (crn) [below=10em of hkrn] {$\frac1{N} \core_r(x,\cX_n)$};
            \node[draw=none] (hkr) [right=12em of hkrn] {$\H^k_r f_r(x)$};
            \node[draw=none] (f) [above right =6em and 12em of hkr] {$f(x)$};
            \node[draw=none] (cr) [below=10em of hkr] {$\core_r(x,f)$};
            \node[draw=none] (c0) [below =10em of f] {$\core_0(x,f)$};
            \path[->]
            (hkrn)    edge  node[above] {\small (\citeauthor[Thm 1]{lu2016h})} node[below] {\small $k \to \infty$} (crn)
            (hkrn)    edge  node[above] {\small \thmref{Hfixedr}} node[below] {$n \to \infty$} (hkr)
            (hkrn)    edge[bend left=15]  node[above] {\small \thmref{H}} node[below] {\small $n \to \infty, r \to 0$} (f)
            (crn)    edge  node[above] {\small \thmref{fixedr}} node[below] {\small $n \to \infty$} (cr)
            (crn)    edge[bend right=55]  node[above] {\small \thmref{main}} node[below] {\small $n \to \infty, r \to 0$} (c0)
            (hkr)    edge  node[above] {\small \prpref{hr-to-f}} node[below] {\small $r \to 0$} (f)
            (hkr)    edge  node[above] {\small \prpref{cr}} node[below] {\small $k \to \infty$} (cr)
            (cr)    edge  node[above] {\small \prpref{c0}} node[below] {\small $r \to 0$} (c0);
        \end{tikzpicture}
\caption{These are the main relationships that we establish. In this diagram, $\H^k_r f_r(x)$ is defined in~\eqref{def:hr} and arises as the large-$n$ limit of $\frac1{N} \H^k_r(x,\cX_n)$; $\core_r(x,f)$ is defined as the large-$k$ limit of $\H^k_r(x,f)$, and is shown to be the large-$n$ limit of $\frac1{N} \core_r(x,\cX_n)$; and $\core_0(x,f)$ is defined as the small-$r$ limit of $\core_r(x,f)$.}
\label{fig:diagram_recap}
\end{figure}

The remaining of the paper is organized as follows.
\secref{h-index} studies the H-index and other finite iterates of the H-index. \secref{core} focuses on the coreness, and is the main part of the paper, being the most technical and also the most interesting, in that the final ($r \to 0$) limiting object is a new function. 
In \secref{numeric} we report on some numerical simulations meant to illustrate the theory and to probe other technical questions that are currently beyond our reach.

\subsection{A preliminary result of stochastic convergence}

The analysis of the centrality measures appearing in this paper relies on the following elementary lemma, which will be used throughout to control stochastic terms. It involves the Vapnik-Chervonenkis dimension of classes of subsets. 
Namely, given a class $\mathcal{S}$ of subsets of $\bbR^d$ and an integer $m \geq 1$, the \emph{scattering coefficient} of $\mathcal{S}$ for $m$ point is defined as
\begin{align*}
\Delta_{\mathcal{S}}(m)
&:=
\max_{x_1,\dots,x_m\in \bbR^d}
\#\{(\ind_{x_1 \in S},\dots,\ind_{x_m\in S}),\: S \in \mathcal{S}\},
\end{align*}
which is the maximum number of different labelings of $m$ points that $\mathcal{S}$ can produce. 
The \emph{Vapnik-Chervonenkis (VC) dimension} of $\mathcal{S}$ is then defined as the maximum number of points that can be arbitrary labeled with $\mathcal{S}$, that is,
\begin{align*}
\mathrm{VC}(\mathcal{S})
:=
\sup\{m\geq 1 , \Delta_{\mathcal{S}}(m) =2^m\}
\in \mathbb{N} \cup \{\infty\}
.
\end{align*}

\begin{lem} \label{lem:eta} Let $(\cS_r)_{r>0}$ be a family of classes of subsets of $\bbR^d$  such that:
\bitem
\item[(i)] The VC-dimension of $\cS_r $ is bounded from above by some $v \in \bbN$ uniformly for all $r > 0$;
\item[(ii)] For all $r > 0$ and $S \in \cS_r$, we have $\diam(S) \leq 2r$.
\eitem
Then, for any sequence $r = r_n$ such that $n r^d \gg \log n$, we have
$$
\eta := \sup_{S \in \cS_r} \frac{1}{\omega r^d} |P_n(S) - P(S)| \xrightarrow[n \to \infty]{} 0~~~\text{a.s.}
$$
where we recall that $P_n$ is the empirical distribution of a $n$-sample drawn from $P$.
\end{lem}

\begin{proof} For $\kappa > 0$ to be chosen later, write
$$
\delta_n := \frac{4}{(2n+1)^{\kappa v}} \quad\text{and}\quad \ve_n := 2 \sqrt{(1+\kappa) v \frac{\log(2n+1)}{n}}.
$$
Using \cite[Thm 5.1]{boucheron2005theory}, we find that the event on which
$$
\sup_{S \in \cS_r} \frac{P(S)-P_n(S)}{\sqrt{P(S)}} \vee \frac{P_n(S)-P(S)}{\sqrt{P_n(S)}} \leq \ve_n
$$
has probability at least $1-2\delta_n$. Using the fact that $P(S) \leq \|f\|_{\infty} \omega (2r)^d$, we find that on this event, there holds
$$
\frac{1}{\omega r^d}(P(S)-P_n(S)) 
\leq 
\sqrt{2^d \|f\|_{\infty}/\omega} \frac{\ve_n}{r^{d/2}}.
$$
On the other hand, $P_n(S)-P(S) \leq \ve_n \sqrt{P(S)}$ yields
$$
\sqrt{P_n(S)} \leq \frac12 \(\sqrt{\ve_n^2 +4 P(S)}+\ve_n\)
,
$$
which in turn implies 
$$
P_n(S) - P(S) \leq \frac12\left(\ve_n^2 + \ve_n \sqrt{\ve_n^2+4P(S)}\right) \leq \frac12\left(\ve_n^2 + \ve_n \sqrt{\ve_n^2+2^{d+2}\|f\|_{\infty}\omega r^d}\right). 
$$
Dividing both sides by $\omega r^d$ yields
$$
\frac{1}{\omega r^d}(P_n(S) - P(S)) \leq \frac{1}{2\omega}\left(\frac{\ve_n^2}{r^d} + \frac{\ve_n}{r^{d/2}} \sqrt{\frac{\ve_n^2}{r^d}+2^{d+2}\|f\|_{\infty}\omega}\right).
$$
All in all, we have proved that
$$
\sup_{S \in \cS_r} \frac{1}{\omega r^d} |P_n(S) - P(S)| = O(\ve_n/r^{d/2})
,
$$
which by assumption goes to $0$ as $n \to \infty$. Taking $\kappa$ large enough so that $\delta_n$ is summable, the Borel-Cantelli lemma then concludes the proof.  
\end{proof}
Note that in particular the result holds if $r = r_0$ is constant and if $\cS_{r_0}$ has finite VC-dimension.

\section{H-Index}
\label{sec:h-index}

The {\em H-index} is named after \cite{hirsch2005index}, who introduced this centrality measure in the context of citation networks of scientific publications.
For a given node in a graph, it is defined as the maximum integer $h$ such that the node has at least $h$ neighbors with degree at least $h$. That is, in our context, the H-index of $x$ in $\cG_r(x, \cX_n)$ writes as
\begin{align*}
\H_r(x, \cX_n) 
:=
\text{largest $h$ such that } \# \big\{X_i \in \ball(x, r):  \deg_r(X_i, \cX_n) \ge h\big\} \ge h,
\end{align*}
where we recall that the degree of $x \in \bbR^d\setminus \cX_n$ in the graph $\cG_r(x, \cX_n)$ is given by
\begin{equation}
\label{eq:degree}
\deg_r(x, \cX_n) := \sum_{i=1}^n \ind_{\|x - X_i\| \le r}.
\end{equation}
Note that in our context, the point set $\cX_n$ is an i.i.d sample with common density $f$ on $\bbR^d$, so that it is composed of $n$ distinct points almost surely. Furthermore, if $x=X_{i_0} \in \cX_n$, the degree of $x$ in the graph $\cG_r(x, \cX_n)$ writes as $\sum_{i \neq i_0} \ind_{\|x - X_i\| \le r} = (\sum_{i=1}^n \ind_{\|x - X_i\| \le r})-1$, and therefore only differs by $1$ from the formula of \eqref{eq:degree}. As this difference will be negligible after renormalization by $1/N$, we will only consider the sum of indicators of \eqref{eq:degree} for simplicity.

The H-index was put forth as an improvement on the total number of citations as a measure of productivity, which in a citation graph corresponds to the degree.
We show below that in the latent random geometric graph model of \eqref{setting}, the H-index can be asymptotically equivalent to the degree --- see Theorems~\ref{thm:Hfixedr} and~\ref{thm:H}. 

\citet{lu2016h} consider iterates of the mechanism that defines the H-indices as a function of the degrees: The second iterate at a given node is the maximum $h$ such that the node has at least $h$ neighbors with H-index at least $h$, and so on.
More generally, given any (possibly random) bounded measurable function $\phi : \bbR^d \to \bbR$, we define the (random) bounded measurable function $\H_{n,r}\phi : \bbR^d \to \bbR$ as 
\begin{align}
\H_{n,r}\phi(x) 
&:= 
\text{largest $h$ such that } \#\big\{X_i \in \ball(x,r): \phi(X_i) \geq h\big\} \geq h
\nonumber
\\
&=
N
\max\left\{
h ~\middle|~
\frac1{N} \sum_{i=1}^n \ind_{\|x-X_i\|\leq r} \ind_{\phi(X_i)/N \ge h} \ge h \right\}
.
\label{def:hnr}
\end{align}
The H-index $\H_r(x,\cX_n)$ can be simply written $\H_{n,r} \deg_{r}(x,\cX_n)$,
and the successive iterations of the H-index $\H^k_r(x,\cX_n)$ are simply $\H^k_{n,r} \deg_{r}(x,\cX_n)$.
Given this reformulation of the iterated H-indices, it is essential to investigate the behavior of  $\H^0_r(x,\cX_n) = \deg_{r}(x,\cX_n)$ first.
We recall this very elementary result regarding the asymptotic properties of the degree in random geometric graphs.

\begin{thm}
\label{thm:degree_rfixed}
(i) If $r>0$ is fixed, then almost surely,
\begin{equation*}
\frac1{N}\, \deg_r(x, \cX_n)
\xrightarrow[n \to \infty]{}
f_r(x)~~\text{uniformly in $x \in \bbR^d$,}
\end{equation*}
where $f_r$ is the convoluted version of the density $f$, namely
\beq
\label{eq:f_r}
f_r(x) := \frac1{|\ball(x,r)|} \int_{\ball(x, r)} f(z) \d z.
\eeq
(ii) If $r = r_n$ is such that $r \to 0$ and $n r^d \gg \log n$, then almost surely,
\begin{equation*}
\frac1{N}\, \deg_r(x, \cX_n) \xrightarrow[n \to \infty]{} f(x) ~~\text{uniformly in $x \in \bbR^d$.}
\end{equation*}  
\end{thm}

\begin{proof}
This comes from a simple application of \lemref{eta} to the collection of sets $\{\cS_r\}_{r > 0}$ with $\cS_r := \{\ball(x,r)~|~x \in \bbR^d\}$, and of the fact that $f_r$ converges uniformly to $f$ since $f$ is assumed to be uniformly continuous on $\bbR^d$.   
\end{proof}

Given the variational formula \eqref{def:hnr}, a natural continuous equivalent of the H-index is the $\H_r$ transform of the density $f$, where $\H_r$ is defined for any non-negative bounded measurable function $\phi : \bbR^d \to \bbR$ as 
\begin{align}
\H_r \phi(x) = \sup\left\{t \geq 0~\middle|~ \frac{1}{\omega r^d} \int_{\ball(x,r)} \ind_{\phi(z) \geq t} f(z) \d z \geq t\right\}.
\label{def:hr}
\end{align}
See \figref{hrphi} for an illustration of this transform. The $k$-th iteration of $\H_r$ applied to $\phi$ is simply denoted by $\H_r^k \phi$. The $\H_r$ transform enjoys a few elementary properties, such as monotonicity, Lipschitzness and modulus of continuity preservation, which we now present. 
The modulus of continuity of a function $g : \bbR^d \to \bbR$ is defined by 
$$
\omega_g(u) := \sup\left\{ |g(x)-g(y)| : \|x-y\| \leq u \right\},$$
for all $u \geq 0$. As in our framework (see \eqref{setting}), $f$ is assumed to be uniformly continuous, $\lim_{u \to 0} \omega_f(u) = 0$. In what follows, $\ell^\infty(\bbR^d)$ denotes the class of bounded measurable maps $\phi : \bbR^d \to \bbR$. 

\begin{figure} 
\centering
\includegraphics[scale = .7]{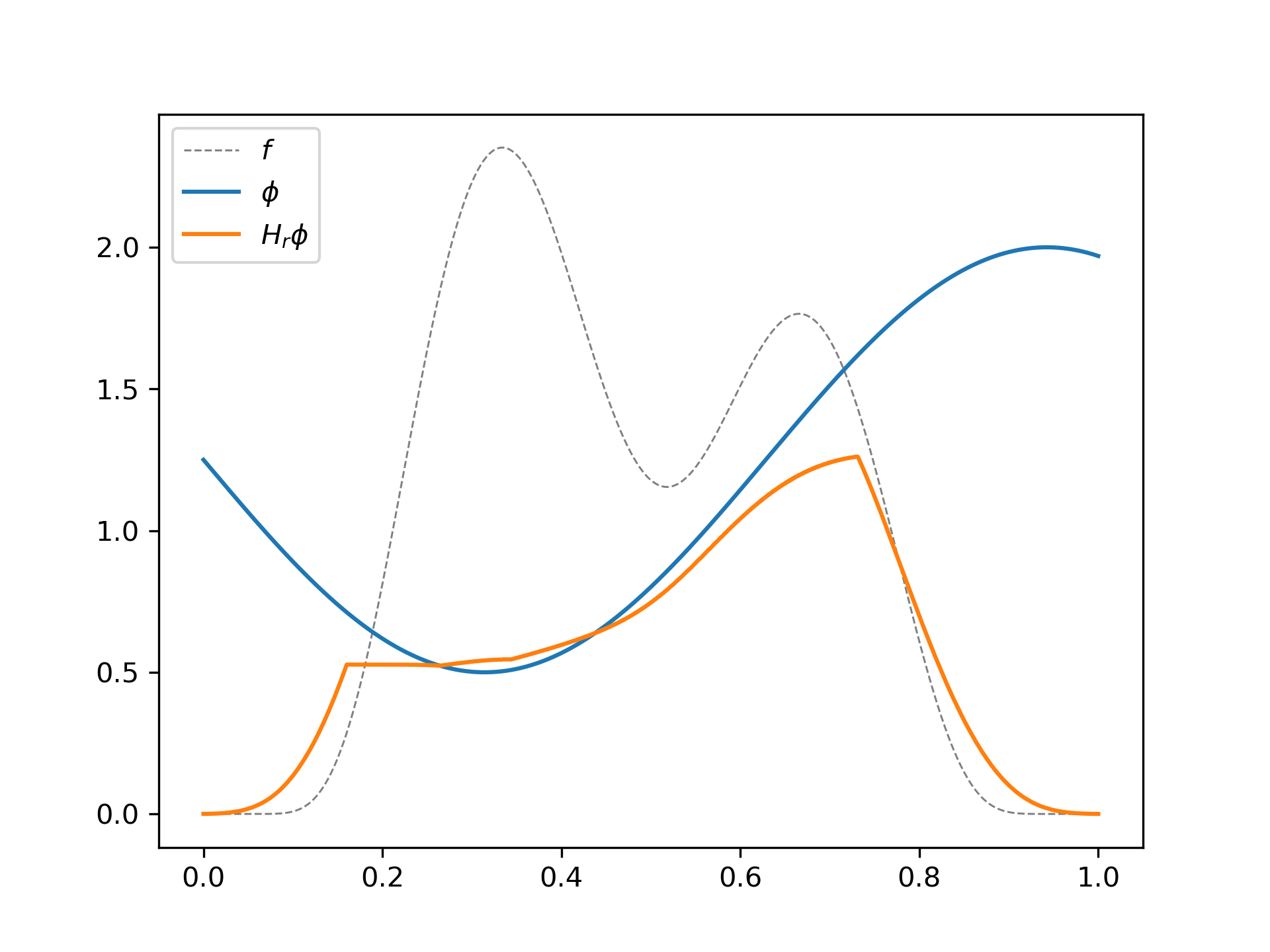}
\caption{
A density $f$, a function $\phi$, and its transform $\H_r \phi$ for $r = 0.1$.
Both $f$ and $\phi$ are smooth. $\H_r \phi$ does not appear to be continuously differentiable everywhere but is nonetheless Lipschitz, with Lipschitz constant no bigger than that of $f$ and $\phi$ (see \lemref{monotonous}).
}
\label{fig:hrphi}
\end{figure}

\begin{lem} \label{lem:monotonous} For all $r > 0$ and $\phi,\psi \in \ell^\infty(\bbR^d)$, there holds:
\benum
\item[1.] (Monotonicity) If $\phi \leq \psi$, then $\H_r \phi \leq \H_r\psi$;
\item[2.] (Lipschitzness) $\|\H_r \phi- \H_r\psi\|_\infty \leq \|\phi-\psi\|_\infty$;
\item[3.] (Regularization) $\omega_{\H_r \phi} \leq \omega_\phi \wedge \omega_f$. In particular, since $\omega_{f_r} \leq \omega_f$, we have $\omega_{\H_r^k f_r} \leq \omega_f$ $\forall k \geq 1$.
\eenum
\end{lem}
\begin{proof}
Point 1.  is trivial once noted that the functional
$$
\phi \mapsto \frac1{\omega r^d} \int_{\ball(x,r)} \ind_{\phi(z) \geq t}f(z) \d z
$$
that appears in the definition of $\H_r$ is non-decreasing in $\phi$.  

For Point 2., let $\ve =  \|\phi-\psi\|_\infty$. We have 
$$
\frac1{\omega r^d} \int_{\ball(x,r)} \ind_{\phi(z) \geq t}f(z) \d z \leq \frac1{\omega r^d} \int_{\ball(x,r)} \ind_{\psi(z) \geq t-\ve}f(z) \d z
$$
so that $\H_r \phi(x) \leq \H_r \psi(x) + \ve$, and the proof follows. 

Finally, for Point 3., Let $x,y \in \bbR^d$, and denote $u = y-x$ and $\ve =\omega_f \vee \omega_\phi(\|x-y\|)$. We have
$$
\frac1{\omega r^d} \int_{\ball(x,r)} \ind_{\phi(z) \geq t}f(z) \d z = \frac1{\omega r^d} \int_{\ball(y,r)} \ind_{\phi(z+u) \geq t}f(z+u) \d z 
\leq 
\frac1{\omega r^d}
\int_{\ball(y,r)} \ind_{\phi(z) \geq t - \ve }f(z) \d z + \ve
$$
so that we immediately find that $\H_r \phi(x) \leq \H_r \phi(y) + \ve$. 
\end{proof}

\subsection{Continuum H-indices: $r > 0$ fixed}
As intuited above, we have the following general convergence result of the random discrete transform $\H_{n,r}$ towards the continuum one $\H_r$.

\begin{lem} \label{lem:hnr} Let $\phi_n, \phi \in \ell^\infty(\bbR^d)$ be random variables such that almost surely, $\frac1N\phi_n \xrightarrow[n \to \infty]{} \phi$ uniformly. Then almost surely, $\frac1N \H_{n,r} \phi_n \xrightarrow[n \to \infty]{} \H_r \phi$ uniformly.
\end{lem}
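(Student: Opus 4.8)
The plan is to identify both the rescaled discrete H-index and the continuum H-index with the largest ``fixed point'' of a non-increasing, left-continuous scalar function, and then to transfer the hypothesis $\tfrac1N\phi_n\to\phi$ to those fixed points. For $x\in\bbR^d$ and $t\in\bbR$, set
\[
G_{n,x}(t):=\frac1N\,\#\bigl\{X_i\in\ball(x,r):\phi_n(X_i)/N\ge t\bigr\},
\qquad
G_x(t):=\frac1{\omega r^d}\int_{\ball(x,r)}\ind_{\phi(z)\ge t}\,f(z)\,\d z .
\]
Reading off \eqref{def:hnr} and \eqref{def:hr} one gets $\tfrac1N\H_{n,r}\phi_n(x)=T_n(x):=\sup\{t\ge 0:G_{n,x}(t)\ge t\}$ and $\H_r\phi(x)=T(x):=\sup\{t\ge 0:G_x(t)\ge t\}$; here the constraint $t\ge 0$ is harmless (both $G_{n,x},G_x\ge 0$), and the suprema are attained, because $G_{n,x}$ is a non-increasing left-continuous step function of $t$ while $G_x$ is non-increasing and left-continuous (dominated convergence applied to $t\mapsto\ind_{\phi(z)\ge t}$, dominated by $f$ on $\ball(x,r)$), so $t\mapsto G(t)-t$ only ever jumps downward and $\{t:G(t)\ge t\}$ contains its supremum. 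It thus suffices to prove $T_n\to T$ uniformly on $\bbR^d$, almost surely.

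The crux is a uniform Glivenko--Cantelli estimate combined with a level shift. Put $\epsilon_n:=\sup_{z\in\bbR^d}|\phi_n(z)/N-\phi(z)|$, so $\epsilon_n\to 0$ a.s.; from $\phi-\epsilon_n\le\phi_n/N\le\phi+\epsilon_n$ and the inclusions $\{X_i\in\ball(x,r):\phi(X_i)\ge t+\epsilon_n\}\subseteq\{X_i\in\ball(x,r):\phi_n(X_i)/N\ge t\}\subseteq\{X_i\in\ball(x,r):\phi(X_i)\ge t-\epsilon_n\}$ one obtains, writing $\widetilde G_{n,x}(s):=\tfrac1N\#\{X_i\in\ball(x,r):\phi(X_i)\ge s\}$, the sandwich $\widetilde G_{n,x}(t+\epsilon_n)\le G_{n,x}(t)\le\widetilde G_{n,x}(t-\epsilon_n)$ for all $x$ and $t$. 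Now $\omega r^d\,\widetilde G_{n,x}(s)$ is the empirical measure of $\ball(x,r)\cap\{\phi\ge s\}$, and these sets range over $\cC:=\{\ball(x,r)\cap\{\phi\ge s\}:x\in\bbR^d,\,s\in\bbR\}$, which is a VC class: it consists of intersections of balls of radius $r$ (a class of finite VC dimension) with members of $\{\,\{\phi\ge s\}:s\in\bbR\,\}$ (a family linearly ordered by inclusion, hence of VC index $\le 1$). Applying \lemref{eta} to $\cC$ yields $\beta_n:=\sup_{x\in\bbR^d,s\in\bbR}|\widetilde G_{n,x}(s)-G_x(s)|\to 0$ a.s. (In every use of Lemma~\ref{lem:hnr} the limit $\phi$ is a deterministic iterate $\H^k_r f_r$, so $\cC$ is a fixed class; otherwise one conditions on $\phi$.) Writing $\gamma_n:=\max(\epsilon_n,\beta_n)\to 0$ and using monotonicity of $G_x$, the sandwich and the $\beta_n$-bound combine into
\[
G_x(t+\gamma_n)-\gamma_n\ \le\ G_{n,x}(t)\ \le\ G_x(t-\gamma_n)+\gamma_n ,
\qquad x\in\bbR^d,\ t\in\bbR .
\]

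It then remains to transfer this two-sided bound to the fixed points. If $G_{n,x}(t)\ge t$, the right inequality gives $G_x(t-\gamma_n)\ge t-\gamma_n$, whence $t-\gamma_n\le T(x)$ (trivially when $t-\gamma_n<0$, since $T(x)\ge 0$; otherwise because $t-\gamma_n$ then lies in the set whose supremum is $T(x)$); taking the supremum over such $t$ yields $T_n(x)\le T(x)+\gamma_n$. Conversely, if $T(x)\ge 2\gamma_n$, then for $t:=T(x)-2\gamma_n\ge 0$, monotonicity and $G_x(T(x))\ge T(x)$ give $G_{n,x}(t)\ge G_x(t+\gamma_n)-\gamma_n\ge G_x(T(x))-\gamma_n\ge T(x)-\gamma_n=t+\gamma_n\ge t$, so $t\le T_n(x)$; hence $T_n(x)\ge T(x)-2\gamma_n$ (and this holds trivially when $T(x)<2\gamma_n$, since $T_n(x)\ge 0$). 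Therefore $\sup_{x\in\bbR^d}|T_n(x)-T(x)|\le 2\gamma_n\to 0$ a.s., which is the claim.

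The delicate point --- and the reason for working with $G_{n,x},G_x$ rather than directly with the variational formulas --- is that uniform convergence of $\phi_n/N$ does \emph{not} pass to the superlevel sets: the profile $t\mapsto G_x(t)$ can genuinely jump (for instance when $\phi$ is constant on a subset of $\ball(x,r)$ of positive $f$-measure), so one cannot ``take limits inside the supremum'' defining the H-index. The remedy above is to never compare $G_{n,x}$ and $G_x$ at equal levels but to sandwich through an $\epsilon_n$-shift and invoke only monotonicity, which is insensitive to such jumps. The only other points requiring care are checking that $\cC$ is a VC class so that \lemref{eta} applies, and the (innocuous, in our applications) possibility that $\phi$ be random.
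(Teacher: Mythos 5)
Your proposal is correct and follows essentially the same route as the paper's proof: shift the level by the uniform error $\ve_n=\|\tfrac1N\phi_n-\phi\|_\infty$, control the empirical deviation uniformly over the VC class $\{\ball(y,r)\cap\{\phi\ge s\}\}$ via \lemref{eta}, and transfer the resulting two-sided bound to the suprema defining $\H_{n,r}\phi_n$ and $\H_r\phi$. You merely spell out more explicitly the monotonicity/left-continuity bookkeeping in the last transfer step, which the paper compresses into one line.
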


\begin{proof} Notice that
\begin{align*} 
\H_{n,r}\phi_n(x) &= \sup\{h \geq 0~|~\Card\{X_i \in \ball(x,r), \phi(X_i) \geq h\} \geq h\} \\
&= N \times \sup\{t \geq 0~|~\Card\{X_i \in \ball(x,r), \phi(X_i) \geq Nt\} \geq Nt\} \\
&= N \times  \sup\left\{t \geq 0~\middle|~\frac1N \sum_{i=1}^n \ind_{X_i \in \ball(x,r)} \ind_{\frac1N \phi_n(X_i) \geq t} \geq t\right\}.
\end{align*} 
Let $\ve = \|\frac1N \phi_n - \phi\|_\infty $. We have
$$
\frac1N \sum_{i=1}^n \ind_{X_i \in \ball(x,r)} \ind_{\frac1N \phi_n(X_i) \geq t} \leq \frac1N \sum_{i=1}^n \ind_{X_i \in \ball(x,r)} \ind_{\phi(X_i) \geq t-\ve}.
$$
Note that the class of balls of $\bbR^d$ is a VC-class, and so is the set of super-level sets of $\phi$. As a result, the class 
$$
\cS_r = \{\ball(y,r) \cap \{\phi \geq s\}, y \in \bbR^d, s \geq 0\}
$$
thus satisfies the assumptions of \lemref{eta}. Furthermore, using notation $\eta$ from \lemref{eta}, we get
$$
\frac1N \sum_{i=1}^n \ind_{X_i \in \ball(x,r)} \ind_{\phi(X_i) \geq t-\ve} \leq \frac{1}{\omega r^d} \int_{\ball(x,r)} \ind_{\phi(z) \geq t - \ve} f(z) \d z + \eta
$$
uniformly in $x$ and $t$. We thus have
$$
\frac1N \H_{n,r}\phi_n(x) \leq \sup\left\{t \geq 0~ \middle | ~\frac{1}{\omega r^d} \int_{\ball(x,r)} \ind_{\phi(z) \geq t - \ve} f(z) \d z \geq t - \eta \right\},
$$
yielding $\frac1N \H_{n,r}\phi_n(x) \leq \H_r \phi(x) + \ve \vee \eta$. 
The lower bound can be obtained in the same fashion. We conclude by letting $n \to \infty$, so that $\eta$ goes to $0$ a.s. (\lemref{eta}) and $\ve$ as well by assumption. 
\end{proof}

When applied iteratively to the sequence of degree functions of $\cG_r(x,\cX_n)$, \lemref{hnr} yields the following result.

\begin{thm} \label{thm:Hfixedr} 
If $r > 0$ and $k \in \bbN^*$ are fixed, then almost surely,
$$
\frac1{N}\, \H^k_r(x, \cX_n) \xrightarrow[n \to \infty]{} \H^k_r f_r(x)
~~\text{uniformly in $x \in \bbR^d$.}
 $$
\end{thm}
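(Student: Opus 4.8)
The plan is to prove this by induction on $k$, using \lemref{hnr} as the inductive engine and \thmref{degree_rfixed} as the base case. Concretely, set $\phi_n^{(0)} := \deg_r(\cdot, \cX_n)$ and $\phi^{(0)} := f_r$, and for $k \ge 1$ define $\phi_n^{(k)} := \H_{n,r}\phi_n^{(k-1)}$ and $\phi^{(k)} := \H_r\phi^{(k-1)} = \H_r^k f_r$. I want to show that, almost surely, $\frac1N \phi_n^{(k)} \to \phi^{(k)}$ uniformly on $\bbR^d$ for every fixed $k$. Since $\phi_n^{(k)} = \H_r^k(\cdot, \cX_n)$ by the very definition of the iterated H-index recalled after \eqref{def:hr}, this is exactly the claim.

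For the base case $k = 0$, \thmref{degree_rfixed} gives $\frac1N \deg_r(\cdot, \cX_n) \to f_r$ uniformly, almost surely. For the inductive step, suppose that on an event of probability one we have $\frac1N \phi_n^{(k-1)} \to \phi^{(k-1)}$ uniformly. I need to check the hypotheses of \lemref{hnr}: that $\phi_n^{(k-1)}$ and $\phi^{(k-1)}$ are (random) elements of $\ell^\infty(\bbR^d)$. Boundedness of $\phi_n^{(k-1)}$ is immediate since $\deg_r \le n$ and each application of $\H_{n,r}$ only decreases the value (the H-index of a node is at most its degree, and more generally $\H_{n,r}\psi \le \sup \psi$ pointwise whenever $\psi \ge 0$, which follows from the variational formula \eqref{def:hnr}); boundedness of $\phi^{(k-1)} = \H_r^{k-1} f_r$ follows similarly from $\H_r\psi \le \sup\psi$ together with $f_r \le \sup f < \infty$. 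Measurability of the iterates is inherited from \eqref{def:hnr} and \eqref{def:hr}. Then \lemref{hnr} applied with $\phi_n = \phi_n^{(k-1)}$ and $\phi = \phi^{(k-1)}$ yields $\frac1N \H_{n,r}\phi_n^{(k-1)} \to \H_r\phi^{(k-1)}$ uniformly, almost surely, i.e. $\frac1N \phi_n^{(k)} \to \phi^{(k)}$ uniformly. Intersecting the countably many probability-one events (one per value of $k$, plus the one from \thmref{degree_rfixed}) gives the conclusion for all $k$ simultaneously on a single probability-one event.

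The one point requiring a little care — and the main obstacle, such as it is — is the bookkeeping around the probability-one events and the measurability/boundedness preconditions needed to legitimately invoke \lemref{hnr} at each stage: \lemref{hnr} is stated for generic random $\phi_n, \phi \in \ell^\infty(\bbR^d)$, so I must verify that the specific iterates produced by the recursion stay in that class and that the "almost surely" from step $k-1$ is the same event feeding into step $k$. All of this is routine once one notes the uniform bound $\|\phi_n^{(k)}\|_\infty \le \|\deg_r(\cdot,\cX_n)\|_\infty \le n$ and $\|\phi^{(k)}\|_\infty \le \|f\|_\infty$, both valid for every $k$. No quantitative rate is needed, so the argument is purely a finite induction combined with countable intersection of full-measure events.
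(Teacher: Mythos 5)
Your proof is correct and follows essentially the same route as the paper: a finite induction on $k$ driven by \lemref{hnr}, with \thmref{degree_rfixed} supplying the base case $\phi_n^{(0)}=\deg_r(\cdot,\cX_n)$, $\phi^{(0)}=f_r$. The extra bookkeeping you supply (boundedness of the iterates and the intersection of the full-measure events) is accurate and merely makes explicit what the paper's one-line argument leaves implicit.
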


\begin{proof}
Apply \lemref{hnr} recursively to find that $\frac1N \H^k_{n,r} \phi_n \to \H_r^k \phi$ for all $k \geq 1$. The stated result follows readily starting from $\phi_n = \deg_{r}(\cdot,\cX_n)$ and $\phi = f_r$.
\end{proof}

We see that the iterated continuum $\H$-indices $\H^k_r f_r$ behave very differently from the convoluted likelihood $f_r$, as shown in \figref{hrfr}. 
In particular, for $k\geq 1$, $\H^k_r f_r(x)$ depends on $f$ in an even less local way than $f_r$, since it depends on the values of $f$ on $\ball(x,(k+1)r)$.

\subsection{Continuum H-indices: $r\to 0$}
To gain insights on what the discrete H-indices converge to as $r = r_n \to 0$, let us  first examine how their fixed-$r$ continuous counterparts $\H_r^k f_r$ behave in the same regime.

\begin{prp}
\label{prp:hr-to-f}
For all $k \geq 1$, $\H^k_r f_r(x) \xrightarrow[r \to 0]{} f(x)$ uniformly in $x \in \bbR^d$.
\end{prp}
\begin{proof}
On one hand, we have $\H^k_r f_r(x) \leq f_r(x) \leq f(x) + \omega_f(r)$. 
On the other hand, we get from the definition of $\H_r f_r$ that $\H_r f_r \geq f - \omega_f(r)$. Using this bound recursively together with \lemref{monotonous}, we find that $\H^k_r f_r \geq f - k \omega_f(r)$.
At the end of the day, we have proven that $\|\H_r^k f_r - f\|_\infty \leq k \omega_f(r)$, which concludes the proof. 
\end{proof}

Coming back to the discrete H-indices, we naturally get that the $k$-th iteration of the $\H$-index converges to $f(x)$ as $r =r_n$ converges to $0$ slowly enough.
\begin{thm} \label{thm:H} 
If $r = r_n$ is such that $r \to 0$ and $n r^d \gg \log n$, then for all $k \in \bbN$, almost surely,
\[
\frac1{N}\, \H^k_r(x, \cX_n) \xrightarrow[n \to \infty]{} f(x)
~~\text{uniformly in $x \in \bbR^d$.}
\]
\end{thm}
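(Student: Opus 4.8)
The plan is to prove the statement by induction on $k\in\bbN$, with \thmref{degree_rtozero} as the base case $k=0$ (recall $\H^0_r(\cdot,\cX_n)=\deg_r(\cdot,\cX_n)$). Note one cannot simply iterate \lemref{hnr} or invoke \thmref{Hfixedr} here, as those are stated at a fixed radius, whereas now $r=r_n\to0$ and the continuum operator $\H_r$ drifts along the sequence. Instead I would carry out the inductive step through a purely combinatorial two-sided bound on $\H_{n,r}$ that reduces it back to the $k=0$ statement. Concretely, I would work throughout on the single probability-one event $E$ on which \thmref{degree_rtozero} holds, i.e.\ $\tfrac1N\deg_r(\cdot,\cX_n)\to f$ uniformly on $\bbR^d$; on $E$ the case $k=0$ is immediate. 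For the inductive step, fix $k\ge1$, set $\phi_n:=\H^{k-1}_r(\cdot,\cX_n)$ so that $\H^k_r(\cdot,\cX_n)=\H_{n,r}\phi_n$, and assume that on $E$ one has $\tfrac1N\phi_n\to f$ uniformly.

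The crux is the following deterministic sandwich, valid pointwise for any non-negative $\phi_n$. First, for every $t\ge0$ the count $\#\{X_i\in\ball(x,r):\phi_n(X_i)\ge Nt\}$ is at most $\deg_r(x,\cX_n)=\#\{X_i\in\ball(x,r)\}$, so every admissible $t$ in \eqref{def:hnr} satisfies $t\le\tfrac1N\deg_r(x,\cX_n)$, which is the upper bound. Second, if $t\ge0$ is such that $\tfrac1N\phi_n(X_i)\ge t$ for \emph{every} $X_i\in\ball(x,r)$, then that same count equals $\deg_r(x,\cX_n)$, so $t$ is admissible as soon as $\tfrac1N\deg_r(x,\cX_n)\ge t$. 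Writing $m_r(x):=\min\{\tfrac1N\phi_n(X_i):X_i\in\ball(x,r)\}$, with $m_r(x):=\tfrac1N\deg_r(x,\cX_n)=0$ when $\ball(x,r)$ contains no point of $\cX_n$, both remarks combine into
\[
\min\!\bigl(m_r(x),\ \tfrac1N\deg_r(x,\cX_n)\bigr)\ \leq\ \tfrac1N\,\H_{n,r}\phi_n(x)\ \leq\ \tfrac1N\deg_r(x,\cX_n), \qquad x\in\bbR^d .
\]

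Then I would check that both bounding functions converge to $f$ uniformly. The right-hand side does by \thmref{degree_rtozero}. For the left-hand side, fix $\epsilon>0$: by the induction hypothesis, on $E$, for $n$ large and every $X_i\in\ball(x,r)$ one has $\tfrac1N\phi_n(X_i)\ge f(X_i)-\epsilon\ge f(x)-\omega_f(r)-\epsilon$ by uniform continuity of $f$, so $m_r(x)\ge f(x)-\omega_f(r_n)-\epsilon$ uniformly in $x$; together with $\tfrac1N\deg_r(x,\cX_n)\ge f(x)-\epsilon$ (again \thmref{degree_rtozero}), the minimum of the two lies between $f(x)-\omega_f(r_n)-\epsilon$ and $\tfrac1N\deg_r(x,\cX_n)\le f(x)+\epsilon$. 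Since $\omega_f(r_n)\to0$ (as $f$ is uniformly continuous and $r_n\to0$) and $\epsilon$ is arbitrary, both sides of the display tend to $f$ uniformly, hence so does $\tfrac1N\H^k_r(\cdot,\cX_n)$ on $E$, which closes the induction.

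The one genuinely delicate point is precisely this $r_n\to0$ regime, where the fixed-radius machinery (\lemref{hnr}, \thmref{Hfixedr}) and the continuum transform $\H_r$ are unavailable along the sequence. The alternative and arguably more natural route — bounding $|\tfrac1N\H^k_r(\cdot,\cX_n)-f|$ by $|\tfrac1N\H^k_r(\cdot,\cX_n)-\H^k_r f_r|+|\H^k_r f_r-f|$ and handling the second term with \prpref{hr-to-f} — would require a version of \lemref{hnr} uniform as $r\to0$, i.e.\ a uniform law of large numbers over the class $\{\ball(x,r)\cap\{f\ge t\}:x\in\bbR^d,\,r>0,\,t\ge0\}$ plus quantitative control of the generalized-inverse step in its proof. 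That is feasible (the class has bounded VC dimension, being an intersection of the balls class with the nested class of super-level sets of $f$), but it is heavier than the sandwich above, whose only empirical-process ingredient is \thmref{degree_rtozero} itself, precisely because its bounding functions involve only plain balls.
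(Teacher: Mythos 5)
Your proof is correct, but it takes a genuinely different route from the paper's. The paper proves \thmref{H} by the bias--variance decomposition you describe (and set aside) in your last paragraph: it writes $|\frac1N\H^k_r(x,\cX_n)-f(x)|\le|\frac1N\H^k_r(x,\cX_n)-\H_r^kf_r(x)|+|\H_r^kf_r(x)-f(x)|$, kills the second term with \prpref{hr-to-f}, and bounds the first by the empirical discrepancy $\eta$ over the class $\{\ball(y,r)\cap\{\phi\ge s\}\}$ with $\phi$ ranging over the finitely many continuum iterates $f_r,\dots,\H_r^kf_r$ (not the superlevel sets of $f$ as you guessed); this class has VC dimension bounded uniformly in $r$ and consists of sets of diameter at most $2r$, so \lemref{eta} applies directly --- the uniformity in $r$ you were worried about is exactly what \lemref{eta} is built to provide. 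Your alternative replaces all of this with the deterministic sandwich $\min(m_r(x),\frac1N\deg_r(x,\cX_n))\le\frac1N\H_{n,r}\phi_n(x)\le\frac1N\deg_r(x,\cX_n)$ and an induction on $k$ whose only stochastic input is \thmref{degree_rtozero}; the sandwich is valid (the upper bound because the count in \eqref{def:hnr} never exceeds the degree, the lower bound because any $t$ below both $m_r(x)$ and $\frac1N\deg_r(x,\cX_n)$ is admissible), the empty-ball case is handled, and the error propagation $\epsilon_k\le\epsilon_0+k\,\omega_f(r)$ matches the paper's $\eta+k\,\omega_f(r)$. What you gain is economy: no continuum operator $\H_r$, no VC argument beyond the one already buried in the degree theorem. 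What the paper's route buys is the intermediate object $\H_r^kf_r$ itself, which is needed elsewhere (\thmref{Hfixedr}, \prpref{hr-to-f}, and the passage to the coreness), so the decomposition there is doing double duty rather than being the shortest path to this one statement.
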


\begin{proof} 
First, decompose
\[
\left|\frac1N\H^k_r(x,\cX_n) - f(x)\right| \leq \left|  \frac1N\H^k_r(x,\cX_n) - \H_r^k f_r (x)\right| + |\H_r^k f_r (x)-f(x)|.
\]
\prpref{hr-to-f} asserts that the second (deterministic) term converges uniformly to zero as $r \to 0$.
For the first (stochastic) one, we use expressions \eqref{def:hnr} and \eqref{def:hr} of $\H_{n,r}$ and $\H_r$ respectively, and the proof of \thmref{Hfixedr}, to get that
\begin{align*} 
\left|\frac1N \H^k_r(x,\cX_n) - \H_r^k f_r (x)\right|
\leq
\eta := \sup_{S \in \cS_r} \frac{1}{\omega r^d} |P_n(S) - P(S)|,
\end{align*} 
where $P_n(\d z) = n^{-1} \sum_{i = 1}^n \delta_{X_i}(\d z)$, $P(\d z) = f(z) \d z$, and 
\begin{align*}
\cS_r &= \Big\{\ball(y,r) \cap \{\phi \geq s\} ~|~y \in \bbR^d, s \geq 0, \phi \in \{f_r,\dots,\H_r^kf_r\}\Big\}.
\end{align*}
As an intersection class of two VC classes, $\cS_r$ is also VC, with dimension uniformly bounded in $r$. It is composed of sets of radii at most $r$, so that \lemref{eta} applies and yields $\eta \to 0$ almost surely as $n \to \infty$.
\end{proof}

\section{Coreness}
\label{sec:core}
The notion of {\em coreness} is based on the concept of {\em core} as introduced by \cite{seidman1983network}. (Seidman does not mention `coreness' and only introduces cores, and we are uncertain as to the origin of the coreness.)
For an integer $\ell \geq 0$, an $\ell$-core of a given graph is a maximal induced subgraph which has minimum degree $\ell$.  
To be sure, this means that any node in an $\ell$-core is neighbor to at least $\ell$ nodes in that core.
In a given graph, the coreness of a node is the largest integer $\ell$ such that the node belongs to an $\ell$-core.
For a recent paper focusing on the computation of the $\ell$-cores, see~\citep{malliaros2020core}.

The coreness is closely related to the degree and H-index. In fact, \cite[Thm 1]{lu2016h} shows that it arises when iterating the definition of the H-index ad infinitum, when starting with the degree function.
That is, in our context, we will study the random coreness
\begin{align}
\core_r(x,\cX_n)
:=
\H_r^\infty(x,\cX_n)
.
\label{eq:coreness-hindex}
\end{align}
In particular, the coreness satisfies the following fixed-point property: The coreness of node $i$ is the maximum $\ell$ such that at least $\ell$ of its neighbors have coreness at least $\ell$. Said otherwise, it is the maximal minimal degree of a subgraph $H$ that contains $x$:
\begin{align}
\core_r(x,\cX_n)
=
\max\left\{ \ell ~\middle|~
\text{there is a subgraph $H$ of $\cG_r(x,\cX_n)$ with $x \in H$ and $\min_{i \in H} \deg_H(i) \geq \ell$}
\right\}.
\label{eq:coreness-subgraphs}
\end{align}
The coreness was analyzed in the context of an Erd\"os--R\'enyi--Gilbert random graph in a number of papers, for example, in \citep{luczak1991size, janson2008asymptotic, pittel1996sudden, riordan2008k, janson2007simple}, and also in the context of other graph models, for example, in \citep{frieze2009line}.  We are not aware of any work that analyzes the coreness in the context of a random geometric graph.

\begin{rem}
As the non-negative integer sequence $(\H_r^k(x,\cX_n))_{k\geq 0}$ is non-increasing, it becomes stationary after some index $k^\infty < \infty$. Said otherwise, the naive algorithm computing $\H_r^\infty(x,\cX_n)$ by iterating the H-index terminates after a finite number of iterations, so that bounding $k^\infty$ is of particular computational interest. 
Such a bound, depending on the geometric structure of the graph, is discussed in \secref{k_max}.
\end{rem}

\subsection{Continuum coreness: $r>0$ fixed}
As defined above in \eqref{eq:coreness-hindex}, the discrete coreness is obtained by applying the H-index operator to the degree infinitely many times. Having in mind \thmref{Hfixedr}, we naturally define the notion of continuum $r$-coreness by taking the limit of the iterated continuum H-index $\H^k_r f_r(x)$ as the number of iteration $k$ goes to $\infty$.

\begin{prp}
\label{prp:cr}
$\H^k_r f_r (x)$ converges uniformly in $x$ as $k\to \infty$. Its limit, denoted by $\core_r(x,f)$, is called the \emph{continuum $r$-coreness} at $x$.
\end{prp}

\begin{rem}
\label{rem:cr-equicontinuous}
Note that since the convergence is uniform, $\core_r(\cdot, f)$ is uniformly continuous and its modulus of continuity is bounded from above by $\omega_f$ (\lemref{monotonous}). See \figref{hrfr} for an illustration of the convergence of the iterations $\H^k_r f_r$ towards $\core_r(\cdot,f)$.
\end{rem}

\begin{figure}[h!] 
\centering
\includegraphics[scale = .7]{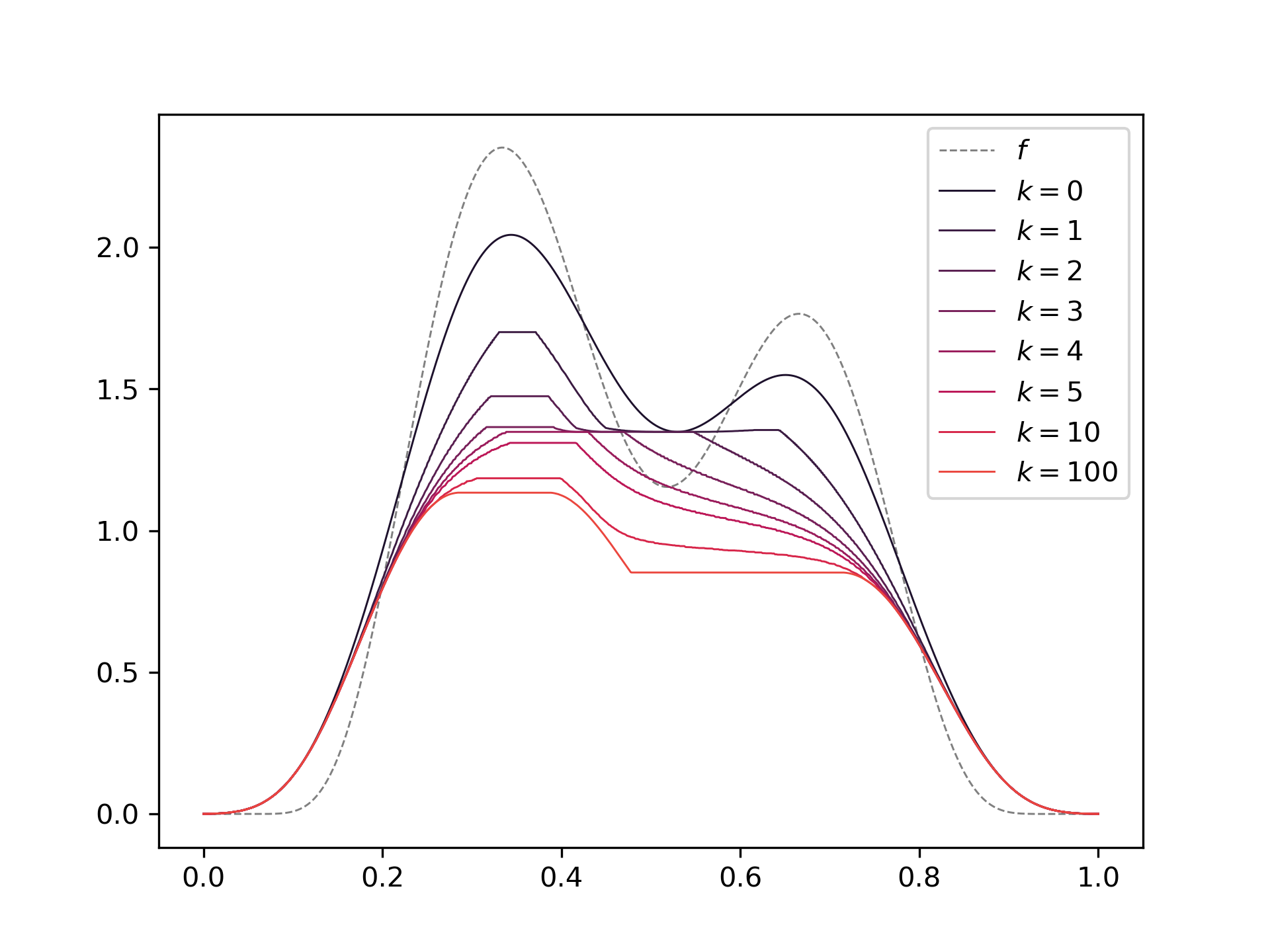}
\caption{The successive iterations of $\H_r^k f_r$ (solid) for a given density $f$ (dashed), for $k$ ranging from $0$ to $100$ with $r=0.1$. The hundredth iteration is very close to its limit $\core_r(x,f)$.}
\label{fig:hrfr}
\end{figure}

\begin{proof}
Since for all $t \geq 0$ and $x \in \bbR^d$,
$$
\frac1{\omega r^d} \int_{\ball(x,r)} \ind_{f_r(z) \geq t} f(z) \d z \leq f_r(x)
,
$$
so that $\H_r f_r \leq f_r$. Using monotonicity of the operator $\H_r$ (\lemref{monotonous}) we find that $(\H_r^k f_r)_{k \in \bbN}$ is a non-increasing sequence of functions, bounded from above by $f_r$ and from below by $0$. 
In particular, it converges towards a function $\core_r(\cdot,f)$ pointwise. Since $|f_r(x)| \leq  \sup_{\ball(x,r)} |f|$ and that the latter goes to $0$ when $x$ goes to $\infty$ (since $f$ is integrable and uniformly continuous over $\bbR^d$), we can focus on establishing the uniform convergence of $\H_r^k f_r$ on a ball 
$\ball(0,R)$ for an arbitrary large radius $R$. Having done so, the sequence $\H_r^k f_r$ is equicontinuous (from \lemref{monotonous}), and the Arzel\`a--Ascoli theorem insures that the convergence towards $\core_r(\cdot,f)$ is uniform over $\ball(0,R)$.
\end{proof}

By analogy with \eqref{eq:coreness-subgraphs}, we may also seek a variational characterization of $\core_r(x,f)$ in terms of subsets of $\bbR^d$, which are the natural continuous counterparts of subgraphs. 
This formulation, besides offering additional geometrical insights, will help with proving convergence from discrete to continuous $r$-coreness (see the proof of \thmref{fixedr}).

\begin{lem} \label{lem:cr-variational}
Let $\Omega(x)$ be the class of measurable sets $S \subset \bbR^d$ that contain $x$.
Then for $r>0$, the continuum $r$-coreness admits the following expression
\begin{align} \label{eq:crvar}
\core_r(x,f)
=
\sup\left\{ t ~\middle|~
\exists S \in \Omega(x)
\text{ such that }
\inf_{y \in S}
\frac1{\omega r^d} \int_{\ball(y, r) \cap S} f(z) \d z \ge t
\right\}
.
\end{align}
\end{lem}

\begin{proof}
Let us write $F(x)$ for the supremum on the right-hand side, and show that $\core_r(\cdot,f) = F$ by considering their super-level sets. Let $t \geq 0$, and $S = \{F \geq t\}$. 
For all $y\in \bbR^d$, we define
\[
g(y) := \frac{1}{\omega r^d}\int_{S \cap \ball(y,r)} f(z)\d z,
\] 
which, by definition of $S$, satisfies $g(y) \geq t$ for all $y \in S$. In particular, we get that for all $y \in S$,
\[
 \frac{1}{\omega r^d}\int_{\ball(y,r)} \ind_{g(z) \geq t}f(z)\d z \geq \frac{1}{\omega r^d}\int_{\ball(y,r)} \ind_{z \in S}f(z)\d z = g(y) \geq t,
\]
so that $\H_r g(y) \geq t$. 
By induction on $k\geq 1$, we find that $\H_r^k g(y) \geq t$ for all $y \in S$, and letting $k \to \infty$, that $\core_r(y,f) \geq t$ for all $y \in S$, so that $S \subset \{\core_r(\cdot,f) \geq t\}$.

For the converse inclusion, notice that since the operator $\H_r$ is $1$-Lipschitz (\lemref{monotonous}) and that $\H^k_r f_r$ converges uniformly towards $\core_r(\cdot,f)$ (\prpref{cr}), we have $\H_r \core_r(\cdot,f) = \core_r(\cdot,f)$. 
Therefore, if $y \in \{\core_r(\cdot,f) \geq t\}$, meaning $\core_r(y,f) \ge t$, by definition of $\H_r$, we get
\[
\frac{1}{\omega r^d}\int_{\ball(y,r)} \ind_{\core_r(z,f)\geq t} f(z) \d z \geq t
\]
yielding, by maximality of $S$, that $\{\core_r(\cdot,f) \geq t\} \subset S$, ending the proof.
\end{proof}

By definition, the continuum $r$-coreness $\core_r(\cdot,f)$ behaves roughly like $\H_r^k f_r$ for $k$ large enough, as shown in \figref{hrfr}.
The variational formulation of \lemref{cr-variational} also highlights the fact that $\core_r(\cdot,f)$ depends on $f$ globally, as it depends on values it takes in the entire space, at least in principle. 
That is, perturbing $f$ very far away from $x$ may change $\core_r(x,f)$ drastically. In \figref{hrfr}, this phenomenon translates into the wider and wider plateaus that $\H_r^k(\cdot,f)$ exhibits as $k$ grows, which eventually approaches $\core_r(\cdot,f)$.

We are now in position to prove the convergence of the renormalized discrete coreness towards the $r$-continuum coreness, for a bandwidth parameter $r>0$ being fixed.

\begin{thm}\label{thm:fixedr}
If $r > 0$ is fixed, then almost surely,
\beq
\frac1N \core_r(x, \cX_n) \xrightarrow[n\to\infty]{} \core_r(x, f)
\text{
uniformly in $x \in \bbR^d$.
}
\eeq
\end{thm}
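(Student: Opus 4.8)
The plan is to deduce the uniform convergence from two matching one-sided bounds, each holding almost surely: an upper bound $\limsup_{n}\sup_{x}\big(\tfrac1N\core_r(x,\cX_n)-\core_r(x,f)\big)\le 0$ and a lower bound $\liminf_{n}\inf_{x}\big(\tfrac1N\core_r(x,\cX_n)-\core_r(x,f)\big)\ge 0$. The upper bound is the easy half: since $k\mapsto\H^k_r(x,\cX_n)$ is non-increasing with limit $\core_r(x,\cX_n)$, we have $\tfrac1N\core_r(x,\cX_n)\le\tfrac1N\H^k_r(x,\cX_n)$ for every fixed $k$, and by \thmref{Hfixedr} the right-hand side converges to $\H^k_r f_r(x)$ uniformly in $x$, almost surely; as $(\H^k_r f_r)_k$ decreases to $\core_r(\cdot,f)$ (\prpref{cr}) we have $\H^k_r f_r\ge\core_r(\cdot,f)$, so almost surely $\limsup_n\sup_x\big(\tfrac1N\core_r(x,\cX_n)-\core_r(x,f)\big)\le\|\H^k_r f_r-\core_r(\cdot,f)\|_\infty$ for every $k$, and the right-hand side vanishes as $k\to\infty$ by the uniform convergence in \prpref{cr}.

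For the lower bound I would combine the subgraph characterization \eqref{eq:coreness-subgraphs} of the discrete coreness with a discretized version of the variational description of the continuum coreness. Fix $\varepsilon>0$ and a finite grid $0=t_0<t_1<\dots<t_m=\|f\|_\infty+1$ of mesh below $\varepsilon$ (here $\|f\|_\infty<\infty$ since $f$ is uniformly continuous and integrable, and $\core_r(\cdot,f)\le f_r\le\|f\|_\infty$), and set $S_j:=\{\core_r(\cdot,f)\ge t_j\}$. From the fixed-point identity $\H_r\core_r(\cdot,f)=\core_r(\cdot,f)$ established inside the proof of \lemref{cr-variational}, one obtains $\tfrac1{\omega r^d}P(\ball(y,r)\cap S_j)\ge t_j$ for every $y\in S_j$. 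Each class $\{\ball(y,r)\cap S_j:y\in\bbR^d\}$ is the intersection of the VC class of $r$-balls with the fixed set $S_j$, hence is VC with dimension at most that of $r$-balls, and all its members have radius at most $r$; thus \lemref{eta} applies and, almost surely, $\eta^{(j)}_n:=\sup_{y}\tfrac1{\omega r^d}\big|(P_n-P)(\ball(y,r)\cap S_j)\big|\to0$. Since there are finitely many indices $j$, almost surely all the $\eta^{(j)}_n$ tend to $0$; work on this event. For an arbitrary $x$, put $t^\ast:=\core_r(x,f)$ and $j^\ast:=\max\{j:t_j\le t^\ast\}$, so that $j^\ast<m$ and therefore $t_{j^\ast}>t^\ast-\varepsilon$. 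The subgraph $H$ of $\cG_r(x,\cX_n)$ induced on $\{x\}\cup(\cX_n\cap S_{j^\ast})$ contains $x$, and every vertex $v$ of $H$ lies in $S_{j^\ast}$, whence $\deg_H(v)\ge\#(\cX_n\cap S_{j^\ast}\cap\ball(v,r))-1=nP_n(\ball(v,r)\cap S_{j^\ast})-1\ge Nt_{j^\ast}-N\eta^{(j^\ast)}_n-1$. By \eqref{eq:coreness-subgraphs} this forces $\core_r(x,\cX_n)\ge Nt_{j^\ast}-N\eta^{(j^\ast)}_n-1$, that is $\tfrac1N\core_r(x,\cX_n)\ge t^\ast-\varepsilon-\max_j\eta^{(j)}_n-\tfrac1N$; taking the infimum over $x$, then $\liminf_n$, then $\varepsilon\to0$ gives the lower bound, and combining with the upper bound proves uniform convergence.

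I expect the main obstacle to be exactly this lower bound — in essence an interchange of the limits $k\to\infty$ and $n\to\infty$ that must be uniform in $x$. A direct appeal to the per-point witnessing sets $S_{x,t}$ furnished by \lemref{cr-variational} would require controlling $P_n-P$ uniformly over the uncountable family $\{\ball(y,r)\cap S_{x,t}:x,y\}$, which is far too rich to be Glivenko--Cantelli. The device that resolves this is that a single nested family --- the super-level sets of the fixed point $\core_r(\cdot,f)$ --- witnesses all $x$ simultaneously, so that after discretizing the threshold only finitely many VC uniform-deviation estimates (\lemref{eta}) are needed; the only routine point to check there is that intersecting the class of $r$-balls with a fixed set preserves the VC property with a dimension bound independent of the set.
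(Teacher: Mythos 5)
Your proof is correct, and its overall two-sided architecture matches the paper's: the upper bound is verbatim the paper's argument (monotonicity of the H-index iterates, \thmref{Hfixedr} for fixed $k$, then $k\to\infty$ via \prpref{cr}), and the lower bound likewise rests on the subgraph characterization \eqref{eq:coreness-subgraphs} together with a witnessing set fed into \lemref{eta}. Where you genuinely depart from the paper is in the choice of witnessing sets. The paper invokes \lemref{cr-variational} to produce, for each pair $(x,t)$ with $t<\core_r(x,f)$, a set $S$ containing $x$ with $\inf_{y\in S}\tfrac{1}{\omega r^d}P(\ball(y,r)\cap S)\ge t$, and then applies \lemref{eta} to the class $\{S\cap\ball(y,r)\}$; since $S$ depends on $x$ and $t$, the claimed uniformity in $x$ of the resulting $\liminf$ bound is asserted rather than fully argued (the almost-sure event in \lemref{eta} depends on the class, hence on $S$). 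Your device --- using the single nested family of super-level sets $S_j=\{\core_r(\cdot,f)\ge t_j\}$, which are witnessing sets by the fixed-point identity $\H_r\core_r(\cdot,f)=\core_r(\cdot,f)$ established inside the proof of \lemref{cr-variational}, together with a finite discretization of the threshold --- reduces the stochastic control to finitely many applications of \lemref{eta} and delivers the uniformity in $x$ honestly. (One small point worth recording: you need $t\mapsto\tfrac{1}{\omega r^d}\int_{\ball(y,r)}\ind_{\core_r(\cdot,f)\ge t}f$ to still dominate $t$ \emph{at} $t=t_j\le\core_r(y,f)$, not just below it; this follows from left-continuity of that map in $t$, by monotone convergence, so the supremum in the definition of $\H_r$ is attained.) In short: same skeleton, but your lower bound is a tightening of the paper's that actually closes the uniformity-in-$x$ gap.
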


\begin{proof}
Let $k \geq 1$. By the decreasingness of the iterations of the H-index $\H^k_r(x,\cX_n)$ and their convergence towards $\core_r(x,\cX_n)$ \cite[Thm 1]{lu2016h}, we have that $\core_r(x,\cX_n) \leq \H^k_r(x,\cX_n)$. Taking $n$ to $\infty$ and using \thmref{Hfixedr}, we find that almost surely,
$$
\limsup_{n\to\infty} \frac1N \core_r(x,\cX_n)  \leq \H^k_r f_r(x)
$$
uniformly in $x$, so that letting $k \to \infty$ and using \prpref{cr}, we have 
\[
\limsup_{n\to\infty} \frac1N \core_r(x,\cX_n) \leq \core_r(x,f).
\]

For the converse inequality, we will use the variational formulation of $\core_r(x,f)$ given by \lemref{cr-variational}.
Let $t < \core_r(x,f)$ and $S \subset \bbR^d$ be such that $x \in S$ and
$$
\frac{1}{\omega r^d}\int_{\ball(y,r) \cap S} f(z) \d z \geq t
\quad
\forall y\in S.
$$
Let $H$ denote the subgraph of $\cG_r(x,\cX_n)$ with vertices in $S$, and  $\deg_H$ the degree of the vertices in this subgraph. We have, for all vertex $s$ in $S$,
$$
\deg_H(s) = n \times P_n(\ball(s,r) \cap S) - 1 \geq N \times( P(\ball(s,r) \cap S) - \eta) - 1 \geq N \times( t - \eta) - 1,
$$
where
$$
\eta := \sup_{A \in \cS_r} \frac{1}{\omega r^d}|P_n(A) - P(A)|, \quad \text{with } \cS_r := \{S \cap \ball(y,r)~|~y \in \bbR^d\},
$$
so that $\core_r(x,\cX_n) \geq  N(t-\eta)-1$. The class $\cS_r$ satisfies the assumptions of \lemref{eta}, and applying that lemma with $r>0$ fixed yields that, almost surely,
$$
\liminf_{n \to \infty} \frac1N\core_r(x,\cX_n) \geq t
$$
uniformly in $x \in \bbR^d$. Letting $t \nearrow \core_r(x,f)$ establishes
$$
\liminf_{n \to \infty} \frac1N\core_r(x,\cX_n) \ge \core_r(x,f),
$$
which concludes the proof.
\end{proof}

\subsection{Continuum coreness: $r\to 0$}
Seeking to complete the construction above to include asymptotic regimes where $r \to 0$, we first opt for a purely functional approach. That is, taking the limit of the continuum $r$-coreness as $r$ goes to zero.

\begin{prp} \label{prp:c0} 
$\core_r(x,f)$ converges uniformly in $x \in \bbR^d$ as $r\to 0$. Its limit, denoted by $\core_0(x,f)$, is called the \emph{continuum coreness} at $x$. 
\end{prp}

The proof of this result relies on an intermediary notion of coreness at scale $\alpha > 0$. Intuitively, the class of subsets appearing in the variational formulation of $\core_r(\cdot,f)$ is too vast to conduct a classical VC-analysis on it. 
Instead, we shall restrict ourselves to subclasses of smoother subsets. The smoothness notion we use is indexed by a thickening parameter $\alpha > 0$. It gives rise to a new family of coreness $\core^\alpha$, closely tied with the continuous one $\core_r$ at small scales.
Given $K \subset \bbR^d$ and $y \in \bbR^d$, we write $\d(y,K) := \inf_{z \in K} \|y-z\|$ for the distance from $y$ to $K$.
We let $\cB_\alpha := \{K^{\alpha}~|~K \subset \bbR^d\}$, where $K^\alpha := \{y \in \bbR^d~|~\d(y,K) \leq \alpha\}$ and define
$$
\core^\alpha(x,f) 
:= \sup\big\{t \geq 0~\big|~\exists S \in \cB_\alpha~\text{with}~ x \in S,~ S \subset \{f \geq t\}~\text{and}~\partial S \subset \{f \geq 2t\}\big \}.
$$
Since $(K^\alpha)^\beta = K^{\alpha+\beta}$ for all $\alpha,\beta \geq 0$, the class $\cB_\alpha$ is increasing as $\alpha \to 0^+$, so is $\core^\alpha(x,f)$, and since the latter in bounded from above by $\|f\|_\infty$, it converges to a finite limit. The following lemma asserts that this limit actually coincides with the limit of $\core_r(x,f)$ as $r \to 0^+$.
\begin{lem} \label{lem:lemc0}
We have $\lim_{r \to 0} \core_r(x,f) = \lim_{\alpha \to 0} \core^\alpha(x,f)$.
\end{lem}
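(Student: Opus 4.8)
The plan is to establish the two inequalities $\lim_{r\to 0}\core_r(x,f) \le \lim_{\alpha\to 0}\core^\alpha(x,f)$ and $\lim_{r\to 0}\core_r(x,f) \ge \lim_{\alpha\to 0}\core^\alpha(x,f)$ separately, using on each side the appropriate variational formula. The workhorse is the asymptotic, as $r\to 0$, of the quantity $\tfrac{1}{\omega r^d}\int_{\ball(y,r)\cap S} f(z)\,\mathrm{d}z$ that appears in \lemref{cr-variational}: for a fixed set $S$ with reasonably regular boundary, this behaves like $f(y)$ when $y$ is in the interior of $S$ and at distance $\gg r$ from $\partial S$, like $f(y)/2$ when $y\in\partial S$ and $\partial S$ is smooth there, and in general is sandwiched between roughly $f(y)\cdot\tfrac{|\ball(y,r)\cap S|}{|\ball(y,r)|}$-type factors up to an $\omega_f(r)$ error. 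The key structural point making $\core^\alpha$ the right intermediary is that if $S\in\cB_\alpha$, i.e.\ $S = K^\alpha$ for some $K$, then every point of $S$ is either at distance $\ge \alpha$ from $\partial S$ (so the $r<\alpha$ ball around it sees $S$ essentially fully, contributing $\gtrsim f(y) - \omega_f(r)$) or lies within distance $\alpha$ of $\partial S$; and near $\partial K^\alpha = \{d(\cdot,K)=\alpha\}$ the set $K^\alpha$ is, from outside, locally a half-space-like region (it satisfies an interior ball condition of radius $\alpha$), so $\tfrac{|\ball(y,r)\cap S|}{|\ball(y,r)|}\ge \tfrac12 - O(r/\alpha)$. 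This is what lets the discrete-coreness constraint match the ``$S\subset\{f\ge t\}$ and $\partial S\subset\{f\ge 2t\}$'' constraint.

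For the direction $\liminf_{r\to 0}\core_r(x,f) \ge \core^\alpha(x,f)$ with $\alpha$ fixed: take $t < \core^\alpha(x,f)$ and a witnessing set $S = K^\alpha \in \cB_\alpha$ with $x\in S$, $S\subset\{f\ge t\}$, $\partial S\subset\{f\ge 2t\}$. I feed this same $S$ into the variational formula of \lemref{cr-variational} for $\core_r(x,f)$. For $r<\alpha$, I split $S$ into the ``deep'' part $\{y\in S: d(y,\partial S)\ge r\}$, where $\ball(y,r)\subset S$ so $\tfrac{1}{\omega r^d}\int_{\ball(y,r)\cap S} f \ge f(y) - \omega_f(r) \ge t - \omega_f(r)$, and the ``shallow'' part within distance $r$ of $\partial S$; there the interior-ball-of-radius-$\alpha$ property of $K^\alpha$ gives $|\ball(y,r)\cap S| \ge (\tfrac12 - c r/\alpha)\,\omega r^d$, hence $\tfrac{1}{\omega r^d}\int_{\ball(y,r)\cap S} f \ge (\tfrac12 - c r/\alpha)(2t - \omega_f'(r))$ for a suitable modulus term — this is $\ge t - o_r(1)$ because $f\ge 2t$ on a neighborhood of $\partial S$. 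Taking the infimum over $y\in S$ shows $\core_r(x,f) \ge t - o_r(1)$, so $\liminf_{r\to 0}\core_r(x,f)\ge t$; let $t\nearrow\core^\alpha(x,f)$ and then $\alpha\to 0$.

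For the reverse direction $\limsup_{r\to 0}\core_r(x,f) \le \lim_{\alpha\to 0}\core^\alpha(x,f)$: fix a sequence $r\to 0$ and, for each $r$, a near-optimal set $S_r\in\Omega(x)$ for $\core_r(x,f)$, so $x\in S_r$ and $\inf_{y\in S_r}\tfrac{1}{\omega r^d}\int_{\ball(y,r)\cap S_r} f \ge \core_r(x,f) - r$. The idea is to ``thicken'' $S_r$: set $K_r := S_r$ and consider $K_r^{\alpha_r}$ for a well-chosen $\alpha_r \to 0$ with $r/\alpha_r\to 0$. For $y\in S_r$, $\ball(y,r)\cap S_r$ has mass $\ge \core_r(x,f)-r$ after normalization, and since $f$ is bounded this forces $|\ball(y,r)\cap S_r| \ge \tfrac{\omega r^d(\core_r(x,f)-r)}{\|f\|_\infty}$, a nontrivial fraction of $\ball(y,r)$; combined with the averaged inequality and uniform continuity of $f$, one deduces $f(y)\ge \core_r(x,f) - o_r(1)$ for $y\in S_r$ and, for $y$ that additionally lie on $\partial(S_r^{\alpha_r})$-ish regions, a factor-$2$ bound $f(y)\ge 2(\core_r(x,f)) - o_r(1)$. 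More carefully, I would show that $S_r^{\alpha_r}\subset \{f\ge \core_r(x,f) - \epsilon_r\}$ and $\partial(S_r^{\alpha_r})\subset\{f\ge 2\core_r(x,f) - \epsilon_r\}$ for some $\epsilon_r\to 0$, which plugs into the definition of $\core^{\alpha_r}$ to give $\core^{\alpha_r}(x,f)\ge \core_r(x,f) - \epsilon_r$, hence $\limsup_{r\to 0}\core_r(x,f)\le \lim_{\alpha\to 0}\core^\alpha(x,f)$.

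The main obstacle I anticipate is the second direction: the optimal sets $S_r$ for $\core_r(x,f)$ are arbitrary measurable sets with no a priori regularity, so the ``thickening'' argument and the transfer of the averaged constraint $\tfrac{1}{\omega r^d}\int_{\ball(y,r)\cap S_r} f\ge t$ to a clean pointwise/boundary constraint on $S_r^{\alpha_r}$ needs care — one has to rule out that a thin tendril of $S_r$ contributes, and control what the $\alpha_r$-neighborhood does to the boundary mass. I expect the right fix is a covering/volume argument: the constraint on $\ball(y,r)\cap S_r$ at every $y\in S_r$ already forces $S_r$ to be ``$r$-thick'' in a measure sense (it cannot be locally much thinner than a half-ball), which is exactly what is needed for $S_r^{\alpha_r}$ to inherit the half-space behavior on its boundary. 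Making this quantitative, with all moduli of continuity tracked uniformly in $x$, is the technical heart; everything else is bookkeeping with \lemref{cr-variational} and the elementary estimates on $|\ball(y,r)\cap S|/|\ball(y,r)|$.
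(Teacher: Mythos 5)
Your overall strategy --- two inequalities, each obtained by plugging a near-optimal set from one variational formula into the other, with the half-ball volume estimate $|\ball(y,r)\cap\ball(z,\alpha)|\geq\omega r^d(1/2-O(r/\alpha))$ as the workhorse --- is exactly the paper's. Your first direction ($\liminf_{r}\core_r\geq\core^\alpha$ for fixed $\alpha$, via the deep/shallow split of $K^\alpha$ and the interior-ball property) matches the paper's argument step for step and is correct as sketched.

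The second direction is where you stop short, and the difficulty you flag --- that an arbitrary measurable witness $S$ for $\core_r(x,f)$ might have thin tendrils, requiring a covering/volume argument to control $\partial(S^{\alpha})$ --- is actually a non-issue, and the covering argument is not the right tool. The one observation that closes the gap is this: if $y\in\partial(S^\alpha)$ then $\d(y,S)=\alpha$, so $S$ is disjoint from the open ball of radius $\alpha$ around $y$. Taking $z_0\in\overline S$ with $\|z_0-y\|=\alpha$, the averaged constraint at $z_0$ gives $t\leq\frac{1}{\omega r^d}\int_{S\cap\ball(z_0,r)}f\leq\frac{|S\cap\ball(z_0,r)|}{\omega r^d}\,(f(z_0)+\omega_f(r))$, while \lemref{geo} gives $|S\cap\ball(z_0,r)|\leq|\ball(z_0,r)\setminus\ball(y,\alpha)|\leq\omega r^d(1/2+O(r/\alpha))$; combining the two yields $f(z_0)\geq 2t-O(r/\alpha)-\omega_f(r)$ and hence $f(y)\geq 2t-O(r/\alpha)-\omega_f(r)-\omega_f(\alpha)$, with no regularity assumption on $S$ whatsoever. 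Note that irregularity only helps here: a thin tendril makes $|S\cap\ball(z_0,r)|$ smaller, which strengthens the lower bound on $f(z_0)$. (The bound $f\geq t-\omega_f(r)-\omega_f(\alpha)$ on all of $S^\alpha$ follows the same way with the trivial volume bound $|S\cap\ball(z_0,r)|\leq\omega r^d$.) A second, minor simplification: you do not need to couple $\alpha_r$ with $r$. Keep $\alpha$ fixed, conclude $\core^\alpha(x,f)\geq\core_r(x,f)-O(r/\alpha)-\omega_f(r)-\omega_f(\alpha)$, let $r\to0$ to get $\limsup_r\core_r\leq\core^\alpha+\omega_f(\alpha)$, and only then let $\alpha\to0$, using the monotonicity of $\alpha\mapsto\core^\alpha$ that you already noted.
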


This result thus asserts the existence of $\core_0(x,f)$ pointwise, as used in the proof of \prpref{c0}. To show \lemref{lemc0}, we first need the following volume estimate.
\begin{lem} \label{lem:geo} For all $r \in (0,\alpha/2]$, $x \in \bbR^d$ and $y \in \ball(x,\alpha)$, we have
$$
|\ball(y,r) \cap \ball(x,\alpha) | \geq \frac12 \omega r^d \(1 - \frac{Cr}{\alpha}\),
$$
where $C$ is a positive constant depending on $d$ only.
\end{lem}
\begin{proof} The quantity $|\ball(y,r) \cap \ball(x,\alpha)|$ is a decreasing function of $\|y-x\|$, so we can only consider the case where $\|x - y\| = \alpha$. Let now
$$
\rho = r\(1 - \frac{r}{2\alpha}\)~~~\text{and}~~~x_0 = x + (\alpha - r + \rho)(y-x). 
$$
Easy computations show that the half ball 
$$
\ball^+ = \ball(x_0,\rho) \bigcap \{z \in \bbR^d~|~\inner{z-x_0}{x-x_0} \geq 0\}
$$
is a subset of $\ball(y,r) \cap \ball(x,\alpha)$ so that
\begin{align*}
|\ball(y,r) \cap \ball(x,\alpha)| 
&\geq |\ball^+| 
= 
\frac12 \omega r^d \(1-r/2\alpha\)^{d} 
\geq  
\frac12 \omega r^d (1-C r/\alpha),
\end{align*}
with $C = d/2$.
\end{proof}

\begin{proof}[Proof of \lemref{lemc0}] 
Let $0< r \leq \alpha$ and let $t = \core^\alpha(x,f)$. 
Let $K \subset \bbR^d$ be such that $K^\alpha \subset \{f \geq t-\ve\}$ and $\partial K^\alpha \subset \{f \geq 2t-2\ve\}$ for some arbitrarily small $\ve > 0$. 
For all $y \in K^\alpha$ at distance at least $r$ from $\partial K^\alpha$, we have $\ball(y,r) \subset K^\alpha$, so that
$$
\frac{1}{\omega r^d} \int \ind_{z \in K^\alpha} \ind_{z \in \ball(y,r)} f(z) \d z 
= 
\frac{1}{\omega r^d}  \int_{\ball(y,r)} f(z) \d z
\geq 
t - \ve - \omega_f(r)
,
$$
where we recall that $\omega_f$ denotes the modulus of continuity of $f$.
Otherwise if $\d(y,\partial K^\alpha) \leq r$, we have for any $v \in \ball(y,r)$ that $f(v) \geq 2t-2\ve -\omega_f(2r)$. We then have, thanks to \lemref{geo}, 
$$
|\ball(y,r) \cap K^\alpha | \geq |\ball(y,r) \cap \ball(z_0,\alpha) | \geq \omega r^d (1/2- O(r/\alpha)),
$$
where $z_0 \in K$ is such that $y \in \ball(z_0,\alpha)$.
We hence deduce that
$$
\frac{1}{\omega r^d} \int \ind_{K^\alpha} \ind_{\ball(y,r)} f \geq t - \ve - O(r/\alpha) - \omega_f(2r),
$$
so that $\core_r(x,f) \geq t - \ve -O(r/\alpha) -\omega_f(2r)$. Taking $r \to 0$ and $\ve \to 0$, we obtain $\liminf_r \core_r(x,f) \geq \core^\alpha (x,f)$, for any $\alpha > 0$.

Conversely, let $S$ be a set containing $x$ such that 
$$
\forall y \in S,~\frac{1}{\omega r^d} \int \ind_{z \in S} \ind_{z \in \ball(y,r)} f(z) \d z \geq t 
.
$$ 
In particular, we have for any $y \in S$, $f(y) \geq t - \omega_f(r)$, so that for any $y \in S^\alpha$, we have $f(y) \geq t - \omega_f(r) - \omega_f(\alpha)$. Let now take $y \in \partial S^\alpha$, and let $z_0 \in S$ be a point at distance at most $\alpha$ from $y$. We have
\begin{align*}
f(y) 
\geq 
f(z_0) - \omega_f(\alpha) 
&\geq \frac{1}{|S\cap \ball(z_0,r)|}  \int \ind_{z \in S} \ind_{z \in \ball(z_0,r)} f(z) \d z   - \omega_f(\alpha) - \omega_f(r)
\\
&\geq 
\frac{\omega r^d}{|S\cap \ball(z_0,r)|} t - \omega_f(\alpha) - \omega_f(r).
\end{align*}
But now, \lemref{geo} again yields
$$|S \cap \ball(z_0,r)| \leq |\ball(z_0,r) \setminus \ball(y,\alpha)| = \omega r^d - |\ball(z_0,r) \cap \ball(y,\alpha)| \leq \omega r^d \(1/2 + O(r/\alpha)\),$$ 
which gives
\begin{align*}
\core^\alpha(x,f) \geq t - O(r/\alpha) - \omega_f(r) - \omega_f(\alpha)
\end{align*}
and hence $\core^\alpha(x,f) + \omega_f(\alpha) \geq \limsup_r \core_r(x,f)$. 
We thus proved that
$$
\core^\alpha(x,f) \leq \liminf_r \core_r(x,f) \leq \limsup_r \core_r(x,f) \leq \core^\alpha(x,f) + \omega_f(\alpha),~~~\forall \alpha > 0,
$$
which allows to conclude.
\end{proof}

\begin{proof}[Proof of \prpref{c0}]
From \lemref{lemc0}, we get that $\core_r(\cdot,f)$ converges pointwise towards a limit $\core_0(\cdot,f)$. 
Since $\core_r(x,f) \leq f_r(x) \leq f(x) + \omega_f(r)$, and since $f \to 0$ at $\infty$ (because $f$ is integrable and is uniformly continuous), we can focus on the uniform convergence of $\core_r(\cdot,f)$ on a ball $\ball(0,R)$ for some arbitrarily large $R > 0$. But now, the uniform convergence on $\ball(0,R)$ is only a consequence of the Arzel\`a--Ascoli theorem and the equicontinuity of $\core_r(\cdot,f)$ (\remref{cr-equicontinuous}).
\end{proof}

As was shown to be the case for $\core_r(\cdot,f)$ in \lemref{cr-variational}, we also give a geometric variational formulation of $\core_0(\cdot,f)$, which is illustrated in \figref{c0-sketch}.

\begin{lem} \label{lem:c0-variational}
Let $\Sigma(x)$ be the class of open sets $S \subset \bbR^d$ with smooth boundaries that contain $x$. 
(That is, $\partial S$ is a disjoint union of smooth $(d-1)$-dimensional submanifolds of $\bbR^d$.)
Then the continuum coreness admits the following expression
$$
\core_0(x,f) = \sup\Big\{t\geq 0~|~\exists S \in \Sigma(x) \text{ such that } S \subset \{f \geq t\} \text{ and } \partial S \subset \{f \geq 2t\}\Big\}.
$$
\end{lem}

\begin{figure}
\centering
\includegraphics[width = 0.7\textwidth]{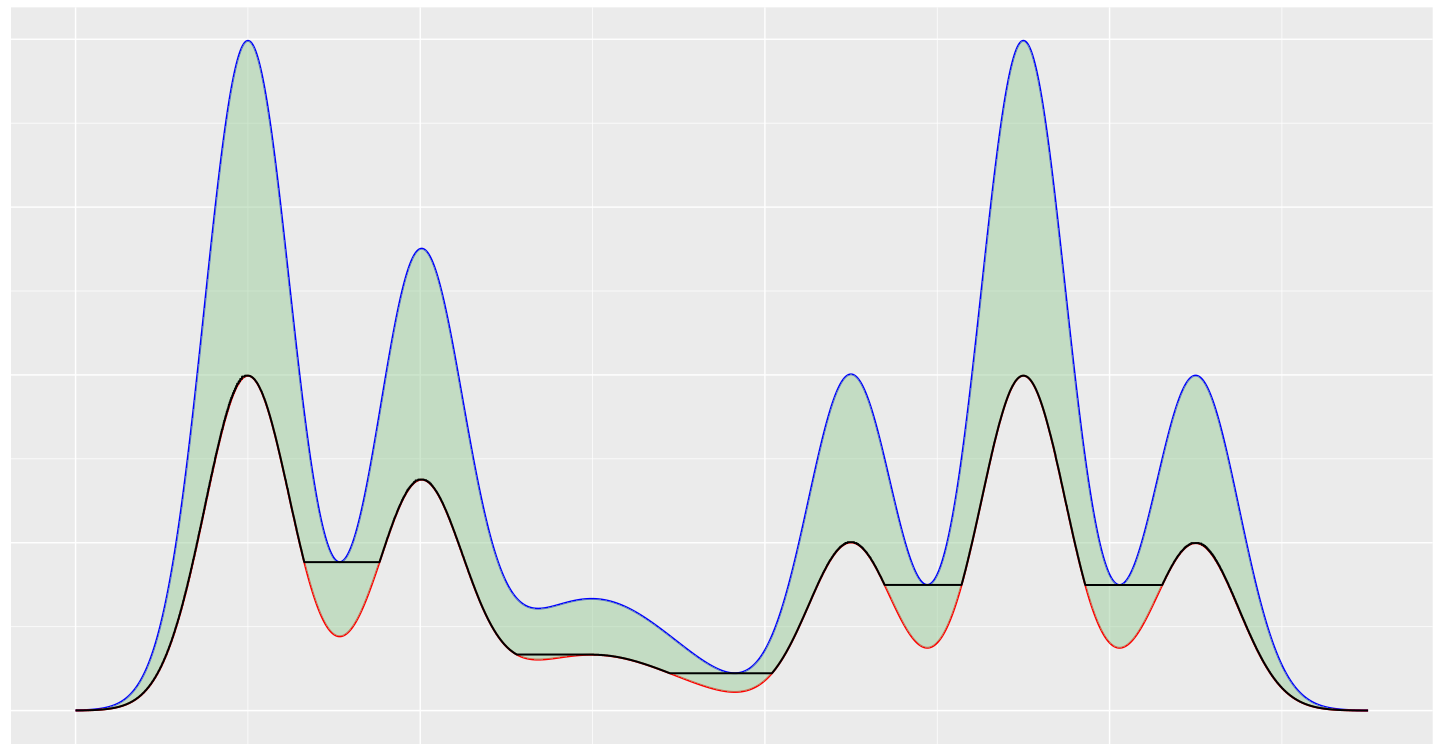}
\caption{
An illustration of $f$ (blue), $f/2$ (red) and $\core_0(\cdot,f)$ (black) for a mixture of $6$ Gaussians in dimension $d=1$.
In the zones where $\core_0(\cdot,f)$ does not coincide with $f/2$, it exhibits plateaus over intervals $[x_{\min},x_{\max}]$. For $x \in (x_{\min},x_{\max})$, the supremum of \lemref{c0-variational} is attained for $S = (x_{\min},x_{\max})$.
Otherwise, this supremum is asymptotically attained for $S = \{x\}$.
}
\label{fig:c0-sketch}
\end{figure}
\begin{proof}
Write $\core^*$ for the supremum of the right hand side. We want to show that $\core^* = \core_0(x,f)$. 
For this, take $t > 0$ such that there exists $S$ containing $x$, with smooth boundary, and such that $S \subset \{f \geq t\}$ and $\partial S \subset \{f \geq 2t\}$.  Then, for any $\alpha > 0$, $S^\alpha$ satisfies
$$
\forall y \in S^\alpha,~f(y) \geq t - \omega_f(\alpha)~~~\text{and}~~\forall y \in \partial S^\alpha,~f(y) \geq 2t - \omega_f(\alpha)
.
$$
As a result, $\core^\alpha(x,f) \geq t-\omega_f(\alpha)$ and thus, letting $\alpha \to 0$, we have $\core_0(x,f) \geq t$, and thus $\core_0(x,f) \geq \core^*$. 

Conversely, denote $t = \core_0(x,f)$ and let $\ve > 0$ and $\alpha > 0$ such that $\core_\alpha(x,f) \geq t - \ve$. There exists $K \subset \bbR^d$ containing $x$ such that $K^\alpha$ satisfies $K^\alpha \subset \{f \geq t-2\ve\}$ and $\partial K^\alpha \subset \{f \geq 2t-4\ve\}$. For $\delta > 0$, let us define
\begin{align*}
\Psi_\delta(y) := \frac{1}{\delta^d} \int_{\bbR^d}  \kappa\(\frac{y-v}{\delta}\) \ind_{K^{\alpha+\delta}}(v)\d v
,
\end{align*}
where $\kappa$ is a smooth positive normalized kernel supported in $\ball(0,1)$. The function $\Psi_\delta : \bbR^d \to \bbR$ is a smooth function with values in $[0,1]$, with $\Psi_\delta = 1$ on $K^\alpha$ and $\Psi_\delta = 0$ outside of $K^{\alpha+2\delta}$. Using Sard's lemma, we can find a regular value of $\Psi_\delta$ in $[1/4,3/4]$, say $\lambda$. The set $S = \{\Psi_\delta > \lambda\}$ is then an open set of $\bbR^d$ with smooth boundary $\partial S = \{\Psi_\delta = \lambda\}$, which contains $K$, so in particular, it contains $x$. Furthermore, any point of $S$ (resp. $\partial S$) is at distance at most $2\delta$ from $K^\alpha$ (resp. $\partial K^\alpha$). We thus have
$$
\forall y \in S,~f(y) \geq t - 2\ve - \omega_f(2\delta)~~~\text{and},~~\forall y \in \partial S^\alpha,~f(y) \geq 2t - 4\ve - \omega_f(2\delta)
,
$$
so that $\core^* \geq t- 2\ve - \omega_f(2\delta)$. Letting $\ve,\delta \to 0$, we find that $\core^* \geq \core_0(x,f)$, ending the proof.
\end{proof}

The above formulation clearly establishes that $\core_0(x,f) \leq f(x)$.
On the other hand, taking for $S$ a ball centered around $x$ with an arbitrary small radius, we find that $\core_0(x,f) \geq f(x)/2$.
The equality actually occurs whenever the homology of the super-level sets of $f$ is simple enough, as shown in \prpref{homology}. 
In particular, this is the case when the super-level sets are contractible sets (such as star-shaped ones), or the union of contractible sets.

\begin{prp} \label{prp:homology} If the complement of all super-level sets of $f$ are path-connected, then $\core_0(x,f) = f(x)/2$ for all $x \in \bbR^d$. This is the case, for example, if $f$ is a mixture of symmetric unimodal densities with disjoint supports.
\end{prp}

\begin{proof}
From the formulation of \lemref{c0-variational} applied with $S$ ranging within open balls centered at $x$ and radius $\delta \to 0$, we see that we always have $\core_0(x,f)\geq f(x)/2$.

Conversely, if $t < \core_0(x,f)$, there exists a smooth set $S \subset \{f \geq t\}$ with $\partial S \subset \{f \geq 2t\}$ that contains $x$. 
Assume for a moment that $S \setminus \{f \geq 2t\}$ is non-empty, and take a point $y$ in it.
Since by assumption $\bbR^d \setminus \{f \geq 2t\}$ is path-connected, there exists a continuous path from $y$ to any point $z \in \bbR^d \setminus S$  that stays in it. Such a path necessarily crosses $\partial S \subset \{f \geq 2t\}$, which is absurd. We hence conclude that $S \subset \{f \geq 2t\}$, so that $f(x) \geq 2t$, and taking $t$ to $\core_0(f,x)$, we find that $\core_0(f,x) \leq f(x)/2$, which concludes the proof.
\end{proof}

Hence, for densities $f$ with simple landscapes, the continuum coreness reduces to half the density.
Otherwise, generically, $\core_0(\cdot,f)$ provides us with a new functional of $f$ that lies between $f/2$ and $f$ (see \figref{c0-sketch}).
As is the case for $\core_r(\cdot,f)$, the continuum coreness $\core_0(\cdot,f)$ depends on the values $f$ on the entire space, at least in principle. This is apparent in the variational formulation of \lemref{cr-variational} and is clearly illustrated by the plateau areas of \figref{c0-sketch}.

We finally address the large-sample limit of $\core_r(x,\cX_n)$ as $r = r_n \to 0$, which does coincide with the continuum coreness $\core_0(x,f)$.

\begin{thm} \label{thm:main}
If $r = r_n$ is such that $r \to 0$ and $n r^d \gg \log n$, then almost surely, 
\beq
\frac1N \core_r(x, \cX_n) \xrightarrow[n\to\infty]{} \core_0(x, f)
~~\text{uniformly in $x \in \bbR^d$.}
\eeq
\end{thm}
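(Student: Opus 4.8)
The plan is to interpolate between the two previously established regimes, following the same triangle-inequality pattern used in the proof of \thmref{H}, but now being careful because the coreness depends on $f$ globally and the number of H-index iterations needed to reach the fixed point is not uniformly bounded. Write
\[
\left| \frac1N \core_r(x,\cX_n) - \core_0(x,f) \right|
\leq
\left| \frac1N \core_r(x,\cX_n) - \core_r(x,f) \right|
+
\left| \core_r(x,f) - \core_0(x,f) \right|.
\]
The second, deterministic term converges to zero uniformly in $x$ as $r \to 0$ by \prpref{c0}. The whole difficulty is therefore the first, stochastic term: \thmref{fixedr} gives its uniform convergence to zero for each \emph{fixed} $r$, but here $r = r_n \to 0$, so we cannot simply quote it and must redo its proof tracking the dependence on $r$.

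The key steps would be as follows. First, for the upper bound ($\limsup$), I would reuse the argument of \thmref{fixedr}: for any fixed $k$, $\core_r(x,\cX_n) \le \H^k_r(x,\cX_n)$ by monotonicity of the H-index iterates \citep[Thm 1]{lu2016h}, so $\frac1N \core_r(x,\cX_n) \le \frac1N \H^k_r(x,\cX_n)$, and by the proof of \thmref{H} the right-hand side is at most $\H^k_r f_r(x) + \eta_k$ where $\eta_k := \sup_{S \in \cS_r^{(k)}} \frac1{\omega r^d}|P_n(S) - P(S)|$ over the VC class $\cS_r^{(k)} = \{\ball(y,r) \cap \{\phi \ge s\} : y, s, \phi \in \{f_r,\dots,\H_r^k f_r\}\}$, which has VC dimension bounded uniformly in both $r$ and $k$ (being a bounded-complexity intersection class). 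Since all sets in $\cS_r^{(k)}$ have radius at most $r$, \lemref{eta} gives $\eta_k \to 0$ a.s.\ under $nr^d \gg \log n$, uniformly; then letting $r \to 0$ with $\H^k_r f_r \to f$ (via \prpref{hr-to-f}, uniformly), and finally $k \to \infty$ with $\core_0 \le f$, one gets $\limsup_n \frac1N \core_r(x,\cX_n) \le \core_0(x,f)$ uniformly. (One must choose $k = k_n \to \infty$ slowly enough, or alternatively first fix $k$, take $n\to\infty$, then $r \to 0$, then $k\to\infty$ — the latter order is cleaner since each limit is uniform.)

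For the lower bound ($\liminf$), I would again use the variational formula of \lemref{c0-variational}: fix $t < \core_0(x,f)$ and a smooth open set $S \in \Sigma(x)$ with $S \subset \{f \ge t'\}$ and $\partial S \subset \{f \ge 2t'\}$ for some $t < t' $. Let $H$ be the subgraph of $\cG_r(x,\cX_n)$ on the vertices in $S$; as in \thmref{fixedr}, $\core_r(x,\cX_n) \ge \min_{s \in S \cap \cX_n} \deg_H(s) = \min_s \big( n P_n(\ball(s,r) \cap S) - 1 \big)$. The point is to lower-bound $P(\ball(s,r)\cap S)$ for $s \in S$ by roughly $\omega r^d t' / (1+o(1))$ when $s$ is in the interior and $\ge \omega r^d t'$ when $s$ is near $\partial S$ — here the $\partial S \subset \{f \ge 2t'\}$ condition and the smoothness of $\partial S$ (so that $|\ball(s,r)\cap S| \ge (\tfrac12 - o_r(1))\omega r^d$ uniformly over $s \in \bar S$, with the $o_r(1)$ controlled by the reach/curvature of $\partial S$) are exactly what make $\frac1{\omega r^d} P(\ball(s,r)\cap S) \ge t' - o_r(1) - \omega_f(r)$ for all $s \in S$. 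Combining with the uniform deviation bound $\eta := \sup_{\ball(y,r)\cap S}\frac1{\omega r^d}|P_n - P|\to 0$ from \lemref{eta}, we get $\frac1N \core_r(x,\cX_n) \ge t' - o_r(1) - \omega_f(r) - \eta - \tfrac1N$, hence $\liminf_n \frac1N \core_r(x,\cX_n) \ge t'$ after $r\to 0$, and then $t' \nearrow \core_0(x,f)$.

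The main obstacle I anticipate is making the geometric estimate in the lower bound uniform in $x$ (and in the choice of $S$): the quantity $|\ball(s,r) \cap S|/|\ball(s,r)|$ converging to $1/2$ on $\partial S$ and $1$ in the interior is only uniform if one has quantitative control on the boundary geometry of $S$, and the $S$ furnished by \lemref{c0-variational} is only guaranteed to be smooth, not uniformly so. Resolving this likely requires either arguing that one may take $S$ with $\cC^2$ boundary of curvature bounded in terms of $f$ (pushing the regularity work into \secref{proofcoreness}, where \lemref{c0-variational} is proven), or a compactness/covering argument exploiting that $\core_0 \le f \to 0$ at infinity so that only a bounded region matters and $S$ may be taken bounded with boundary in a compact family. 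I expect the authors defer the careful version to the appendix (\secref{proofcoreness}), invoking \lemref{lemc0} and the machinery already built for \lemref{c0-variational}.
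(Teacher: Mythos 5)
Your overall architecture (a $\limsup$ bound and a $\liminf$ bound, with a variational characterization for the latter) matches the paper's, and you correctly sense where the danger lies, but your upper bound does not work as described. For any \emph{fixed} $k$, the chain $\frac1N \core_r(x,\cX_n) \le \frac1N \H^k_r(x,\cX_n) \le \H^k_r f_r(x) + \eta_k$ followed by $r=r_n\to 0$ only yields $\limsup_n \frac1N\core_{r_n}(x,\cX_n) \le f(x)$, because \prpref{hr-to-f} sends $\H^k_r f_r$ to $f$ --- not to $\core_0(\cdot,f)$ --- when $r\to 0$ at fixed $k$; and since $\core_0(\cdot,f)$ can be as small as $f/2$, the inequality $\core_0\le f$ runs in the wrong direction to upgrade this to the claimed bound. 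The two iterated limits do not commute ($k\to\infty$ then $r\to 0$ gives $\core_0$, while $r\to 0$ then $k\to\infty$ gives $f$), so one genuinely must take $k=k_n\to\infty$ jointly with $r_n\to 0$; but then you need (i) a rate, uniform in $r$, for the convergence $\H^k_rf_r\to\core_r(\cdot,f)$ of \prpref{cr}, which the paper obtains only qualitatively via monotonicity and Arzel\`a--Ascoli, and (ii) control of an empirical process over a class whose complexity grows with $k_n$ (the union over $k_n+1$ super-level-set families need not have uniformly bounded VC dimension, contrary to what you assert). This is precisely the obstruction the paper flags when it says the VC argument of \thmref{H} and \thmref{fixedr} ``does not carry through'' here.

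The paper's actual proof bypasses the H-index iteration altogether. It introduces an intermediary functional $\core^\alpha(x,f)$, defined like the variational formula of \lemref{c0-variational} but with witnesses restricted to $\alpha$-thickened sets $K^\alpha=\{y : \d(y,K)\le\alpha\}$, and shows that $\lim_{r\to0}\core_r(x,f)=\lim_{\alpha\to 0}\core^\alpha(x,f)=\core_0(x,f)$ (\lemref{lemc0}). For the $\limsup$ (\lemref{lower}) it takes the discrete witness subgraph $S_n$ of $\core_r(x,\cX_n)$ from \eqref{eq:coreness-subgraphs}, thickens it to $S_n^\alpha$, and checks that this set certifies a value of $\core^\alpha(x,f)$ of roughly $\core_r(x,\cX_n)/N$; for the $\liminf$ (\lemref{upper}) it uses a witness $S=K^\alpha$ and lower-bounds degrees in the induced subgraph. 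In both directions the $\alpha$-thickening is exactly what resolves the uniformity issue you identified at the end of your proposal: every point of $K^\alpha$ lies in an inner ball of radius $\alpha$ contained in $K^\alpha$, so the volume estimate $|\ball(s,r)\cap K^\alpha|\ge \omega r^d(1/2-O(r/\alpha))$ of \lemref{geo} holds uniformly, and the empirical process only ever needs to be controlled over intersections and differences of \emph{pairs of balls} of radii $r$ and $\alpha$ --- a VC class uniformly in $r$, $\alpha$ and $x$, to which \lemref{eta} applies --- rather than over $\{\ball(s,r)\cap S_x\}$ for an $x$-dependent family of smooth sets. Your suggested fixes for the lower bound (uniform curvature bounds on smooth witnesses, or a compactness argument) are not what the paper does, and in any case would leave the upper bound unrepaired.
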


The remaining results are directed towards the proof of \thmref{main}, which follows directly from \lemref{lower} and \lemref{upper}. The usual decomposition in term of variance and bias that we used for instance in the proof of \thmref{H} does not work here, because the deviation term would be indexed by a class of subsets that is too rich (and which would not satisfy the assumptions of \lemref{eta}). Instead, we take advantage of the alternative definition of the coreness through $\core^\alpha$ introduced in the beginning of this \secref{core}.

\begin{lem} \label{lem:lower} If $r = r_n$ is such that $r \to 0$ and $n r^d \gg \log n$, then almost surely, 
$$
\limsup_{n \to \infty} \frac{1}{N}\core_{r}(x,\cX_n)\leq \core_0(x,f)~~\text{uniformly in $x \in \bbR^d$.}
$$
\end{lem}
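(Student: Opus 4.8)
Throughout, $L:=\core_r(x,\cX_n)$ and we work on the almost sure event that the various conclusions of \lemref{eta} hold. The strategy is to feed a witness for the \emph{discrete} coreness into the auxiliary functional $\core^\alpha(x,f)$, and then to use that $\core^\alpha(x,f)\le\core_0(x,f)$ for \emph{every} $\alpha>0$: indeed, as noted just before \lemref{lemc0}, $\cB_\alpha$ grows as $\alpha\downarrow 0$, so $\core^\alpha(x,f)$ is non-increasing in $\alpha$, and by \lemref{lemc0} together with \prpref{c0} it increases to $\core_0(x,f)$ as $\alpha\to0$. Fix $x\in\bbR^d$. By the subgraph characterization \eqref{eq:coreness-subgraphs}, pick a finite set $W\subseteq\{x\}\cup\cX_n$ with $x\in W$ such that every $w\in W$ has at least $L$ neighbors in $W$; in particular $\ball(w,r)$ contains at least $L-1$ points of $\cX_n$ for each $w\in W$. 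Fix also a scale $\alpha=\alpha_n$ with $r_n\ll\alpha_n\to 0$ (possible since $r_n\to 0$; e.g.\ $\alpha_n=\sqrt{r_n}$), and set $S:=W^{\alpha_n}=\bigcup_{w\in W}\ball(w,\alpha_n)\in\cB_{\alpha_n}$, so $x\in S$. The plan is to show that $S$ certifies $\core^{\alpha_n}(x,f)\ge \tfrac1N\core_r(x,\cX_n)-\varepsilon_n$ for some $\varepsilon_n\to0$ uniform in $x$; since $\core^{\alpha_n}(x,f)\le\core_0(x,f)$, taking $\limsup_n$ then yields the lemma.

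The first ingredient is a crude density lower bound valid on all of $S$. For $w\in W$ we have $P_n(\ball(w,r))\ge (L-1)/n$, and \lemref{eta} applied to the class of radius-$r$ balls (bounded VC dimension, diameter $2r$) gives $\eta_n:=\sup_y\tfrac1{\omega r^d}|P_n(\ball(y,r))-P(\ball(y,r))|\to0$; hence $\tfrac1{\omega r^d}\int_{\ball(w,r)}f\ge\tfrac LN-\eta_n-\tfrac1N$. Since the left-hand side is at most $f(w)+\omega_f(r)$ and $N=n\omega r^d\to\infty$, we get $f(w)\ge\tfrac LN-o(1)$ uniformly in $w\in W$. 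As every point of $S$ lies within $\alpha_n$ of $W$ and $\omega_f(\alpha_n)\to0$, this gives $S\subseteq\{f\ge\tfrac LN-o(1)\}$.

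The second, sharper, ingredient concerns $\partial S$. If $y\in\partial S$ then $\d(y,W)=\alpha_n$, so $\|w-y\|\ge\alpha_n$ for all $w\in W$; choosing a nearest vertex $v\in W$ (so $\|v-y\|=\alpha_n$, in particular $v\in\ball(y,\alpha_n)$ and $r\le\alpha_n$ for $n$ large), the set $W\cap\ball(v,r)$ lies, up to a Lebesgue-null set, in $\ball(v,r)\setminus\ball(y,\alpha_n)$, which by \lemref{geo} has volume at most $\tfrac12\omega r^d(1+O(r/\alpha_n))=\tfrac12\omega r^d(1+o(1))$. This region still contains at least $L-1$ points of $\cX_n$; applying \lemref{eta} to the class $\{\ball(v,r)\setminus\ball(y,\alpha_n):v,y\in\bbR^d\}$ (an intersection/difference of ball classes, hence VC of bounded dimension, and of diameter $2r$) and bounding $f$ on that region by $f(v)+\omega_f(r)$, we obtain $f(v)\ge\tfrac{2L}N-o(1)$, whence $f(y)\ge f(v)-\omega_f(\alpha_n)\ge\tfrac{2L}N-o(1)$; thus $\partial S\subseteq\{f\ge\tfrac{2L}N-o(1)\}$. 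Setting $t_n:=\tfrac LN-o(1)$ with the $o(1)$ taken as the worst of these (uniform) errors, $S$ satisfies $x\in S\in\cB_{\alpha_n}$, $S\subseteq\{f\ge t_n\}$ and $\partial S\subseteq\{f\ge 2t_n\}$, so $\core^{\alpha_n}(x,f)\ge t_n$, and therefore $\tfrac1N\core_r(x,\cX_n)\le\core^{\alpha_n}(x,f)+o(1)\le\core_0(x,f)+o(1)$ uniformly in $x$; taking $\limsup_n$ proves the claim.

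I expect the boundary estimate — and in particular capturing the factor $2$ — to be the crux. Thickening $W$ only at a scale $\alpha_n\asymp r_n$ would confine $W\cap\ball(v,r)$ to a region of volume $(\tfrac12+c)\omega r^d$ for a fixed $c=c(d)>0$, giving merely $f(v)\ge\tfrac{2L}{(1+2c)N}$ and hence the weaker bound $\tfrac1N\core_r(x,\cX_n)\le(1+2c)\core_0(x,f)+o(1)$; it is essential that $\alpha_n/r_n\to\infty$ so that $\ball(v,r)\setminus\ball(y,\alpha_n)$ degenerates to \emph{exactly} a half-ball, matching the factor $2$ built into $\core^\alpha$ and $\core_0$. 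A second pitfall to avoid is trying to put the set $S$ itself into the stochastic bound: the sets $\ball(v,r)\cap W^{\alpha_n}$, as $W$ ranges over subsets of $\cX_n$, form far too rich a family for \lemref{eta}, which is precisely why the deviation control must be routed only through balls and ball-differences.
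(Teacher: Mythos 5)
Your proof is correct and follows essentially the same route as the paper's: it thickens the vertex set of a witness subgraph for the discrete coreness into an element of $\cB_\alpha$, verifies the interior and boundary conditions defining $\core^\alpha$ via \lemref{geo} and \lemref{eta} applied to balls and ball differences (the half-ball volume bound supplying the factor $2$), and concludes via $\core^\alpha(x,f)\le\core_0(x,f)$. The only, immaterial, difference is that you diagonalize with $\alpha_n=\sqrt{r_n}$, whereas the paper keeps $\alpha$ fixed as $n\to\infty$ and sends $\alpha\to 0$ only at the very end.
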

\begin{proof} 
For short, write $c_n = \core_{r}(x,\cX_n)$, and $S_n$ for the vertices of a subgraph of $\cG_{r}(x,\cX_n)$ containing $x$ with minimal degree $c_n$. Let $\alpha > 0$ and consider $S^\alpha_n \in \cB_\alpha$. For any $y \in S^\alpha_n$, there exists $s \in S_n$ such that $\|s - y\| \leq \alpha$. We deduce that
\begin{align*} 
f(y) \geq f(s) - \omega_f(\alpha) \geq \frac{1}{\omega r^d} P(\ball(s,r)) - \omega_f(r) - \omega_f(\alpha) \geq \frac{c_n}{N} - \eta - \omega_f(r) - \omega_f(\alpha).
\end{align*} 
where we denoted by
\begin{align} \label{etalower}
\begin{split}
\eta &= \sup_{S\in \cS_r} \frac1{\omega r^d} |P_n(S) - P(S)|, ~~~
\\ \text{with} ~~~ \cS_r &= \{\ball(y,r) \cap \ball(z, \alpha)~|~y,z \in \bbR^d \}~\bigcup ~\{ \ball(y,r) \setminus \ball(z, \alpha)~|~y,z \in \bbR^d \}.
\end{split} 
\end{align} 
The sets $\cS_r$ satisfy the assumptions of \lemref{eta}, so that $\eta$ goes to $0$ almost surely as $n \to \infty$.
Now, let $y \in \partial S_n^\alpha$, and take $s \in S_n$ among its nearest neighbors in $S_n$. 
This neighbor $s$ is at distance exactly $\alpha$ from $y$, so that $|S_n \cap \{\ball(s,r) \setminus \ball(y,\alpha)\}| = c_n$. But on the other hand, we have
\begin{align*} 
\frac{1}{\omega r^d} \int \ind_{\ball(s,r) \setminus \ball(y,\alpha)}(z) f(z)\d z &\leq \frac{f(s) + \omega_f(r)}{\omega r^d} \int \ind_{\ball(s,r) \setminus \ball(y,\alpha)}(z) \d z 
\\&\leq \(f(s) + \omega_f(r)\)\(1/2 + O(r/\alpha)\),
\end{align*} 
so that
\begin{align*} 
f(y)
\geq 
f(s) - \omega_f(\alpha) 
&\geq 
(2 - O(r/\alpha)) \frac{1}{\omega r^d} P(\ball(s,r) \setminus \ball(y,\alpha)) - \omega_f(r) - \omega_f(\alpha) \\
&\geq (2-O(r/\alpha)) \(\frac{c_n}{N} - \eta\)- \omega_f(r)- \omega_f(\alpha) \\
&\geq 2 \frac{c_n}{N} - O(r/\alpha) - 2\eta - \omega_f(r)- \omega_f(\alpha).
\end{align*} 
Putting the two estimates of $f$ over $\partial S^\alpha$ and $S^\alpha$ together, we have shown that
$$
\core^\alpha(x,f) \geq \frac{c_n}{N} - O(r/\alpha) - \eta - \omega_f(r)- \omega_f(\alpha)
,
$$
so that, using \lemref{eta}, we have almost surely,
$$
\limsup_{n \to \infty} \frac{c_n}{N} \leq \core^\alpha(x,f) + \omega_f(\alpha)~~\text{uniformly in $x \in \bbR^d$.}
$$
Letting $\alpha \to 0$ then concludes the proof.
\end{proof}

\begin{lem}  \label{lem:upper}
If $r = r_n$ is such that $r \to 0$ and $n r^d \gg \log n$, then almost surely, 
$$
\liminf_{n \to \infty} \frac{1}{N} \core_r(x,\cX_n) \geq \core_0(x,f)~~\text{uniformly in $x \in \bbR^d$.}
$$
\end{lem}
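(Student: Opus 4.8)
The plan is to establish the reverse inequality to \lemref{lower} by explicitly constructing a subgraph of $\cG_r(x,\cX_n)$ witnessing a large coreness, through the variational formula \eqref{eq:coreness-subgraphs}. As pointed out just before \lemref{lower}, a bias/variance split is not available here: the natural deviation term would range over sets of the form $S\cap\ball(y,r)$ with $S$ an arbitrary thickened set, which is far from being a VC class. The remedy is to work with $\core^\alpha(x,f)$, whose witnesses $K^\alpha=\{y:\d(y,K)\le\alpha\}$ are unions of balls of a \emph{fixed} radius $\alpha$; this allows one to anchor every vertex of the constructed subgraph to a ``core ball'' $\ball(z_0,\alpha)$ with $z_0\in K$, so that all empirical fluctuations get confined to the class $\cS_r:=\{\ball(z,\alpha)\cap\ball(y,r):y,z\in\bbR^d\}$ (which contains every $\ball(y,r)$ as soon as $r\le\alpha$), an intersection of two VC classes of balls, with VC dimension bounded by a constant depending only on $d$ and all of whose members have diameter at most $2r$.

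Fix $\ve>0$ and, using uniform continuity of $f$, choose $\alpha=\alpha(\ve)>0$ with $\omega_f(\alpha)\le\ve/4$. The chain of inequalities obtained in the proof of \lemref{lemc0} shows that $\|\core^\alpha(\cdot,f)-\core_0(\cdot,f)\|_\infty\le\omega_f(\alpha)$, hence $\core^\alpha(x,f)\ge\core_0(x,f)-\ve/4$ for every $x$ --- this is the uniform-in-$x$ control that makes the whole reduction work. Now fix $x$ and put $t:=\core_0(x,f)-\ve/2$; if $t\le0$ the desired bound is trivial, so assume $t>0$. Then $t<\core^\alpha(x,f)$, so by definition of $\core^\alpha$ there is a set $K=K_x\subset\bbR^d$ with $x\in K^\alpha$, $K^\alpha\subset\{f\ge t\}$ and $\partial K^\alpha\subset\{f\ge 2t\}$. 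I would then take $H$ to be the subgraph of $\cG_r(x,\cX_n)$ induced by $\{x\}\cup(\cX_n\cap K^\alpha)$ (for $n$ large enough that $r\le\alpha$) and lower bound its minimum degree.

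To do so, split according to the distance of a vertex $v\in H$ to $\partial K^\alpha$. If $\d(v,\partial K^\alpha)\ge r$ then $\ball(v,r)\subset K^\alpha\subset\{f\ge t\}$, so $P(\ball(v,r))\ge t\,\omega r^d$ and $\deg_H(v)\ge\#\{X_i\in\ball(v,r)\}-1\ge N(t-\eta)-1$, where $\eta:=\sup_{S\in\cS_r}\frac1{\omega r^d}|P_n(S)-P(S)|$. If $\d(v,\partial K^\alpha)<r$, pick $w\in\partial K^\alpha$ with $\|v-w\|<r$ and $z_0\in K$ with $\|v-z_0\|\le\alpha$; then $\ball(z_0,\alpha)\cap\ball(v,r)\subset K^\alpha\cap\ball(v,r)$, every point of $\ball(v,r)$ lies within $2r$ of $w$ so $f\ge 2t-\omega_f(2r)$ on $\ball(v,r)$, and \lemref{geo} gives $|\ball(z_0,\alpha)\cap\ball(v,r)|\ge\omega r^d(1/2-Cr/\alpha)$; multiplying the last two facts yields $P(\ball(z_0,\alpha)\cap\ball(v,r))\ge\omega r^d\bigl(t-O(r/\alpha)-\omega_f(2r)\bigr)$, whence $\deg_H(v)\ge\#\{X_i\in\ball(z_0,\alpha)\cap\ball(v,r)\}-1\ge N\bigl(t-O(r/\alpha)-\omega_f(2r)-\eta\bigr)-1$. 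The vertex $x$ is treated identically (whether or not $x\in\cX_n$ only shifts a count by one, which is negligible after dividing by $N$). Using \eqref{eq:coreness-subgraphs}, $\core_r(x,\cX_n)\ge\min_{v\in H}\deg_H(v)$, so, since the constant in $O(r/\alpha)$ depends only on $d$ and $\cS_r$ does not depend on $x$,
\[
\frac1N\core_r(x,\cX_n)-\core_0(x,f)\ \ge\ -\frac{\ve}{2}-O(r/\alpha)-\omega_f(2r)-\eta-\frac1N\qquad\text{for every }x.
\]

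Finally, since $\cS_r$ has VC dimension bounded by a constant depending only on $d$ (uniformly in $r$ and $\alpha$) and members of diameter at most $2r$, \lemref{eta} gives $\eta\to0$ a.s.; letting $n\to\infty$ with $\alpha$ fixed (so $r\to0$), the terms $O(r/\alpha)$, $\omega_f(2r)$, $\eta$, $1/N$ all vanish, hence a.s. $\liminf_{n\to\infty}\inf_x\bigl(\frac1N\core_r(x,\cX_n)-\core_0(x,f)\bigr)\ge-\ve/2$, and intersecting over $\ve=1/m$, $m\ge1$, yields the stated uniform bound. The step I expect to be the genuine obstacle is the one the text itself flags before \lemref{lower}: forcing all the randomness through an honest VC class, which is exactly what dictates both the anchor-ball construction and the detour through $\core^\alpha$; the secondary subtlety is making the $\alpha$-approximation uniform in $x$, handled by $\|\core^\alpha(\cdot,f)-\core_0(\cdot,f)\|_\infty\le\omega_f(\alpha)$.
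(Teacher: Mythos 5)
Your proposal is correct and follows essentially the same route as the paper's proof: reduce to $\core^\alpha$ via the uniform bound from \lemref{lemc0}, take the subgraph induced by the sample points in a thickened witness set $K^\alpha$, split its vertices according to their distance to $\partial K^\alpha$, anchor the boundary case to a ball $\ball(z_0,\alpha)$ so that \lemref{geo} and the VC class of \lemref{eta} apply, and then let $n\to\infty$ before $\alpha,\ve\to 0$. The only differences are cosmetic (fixing $t=\core_0(x,f)-\ve/2$ up front rather than $t=\core^\alpha(x,f)$, and integrating the pointwise lower bound on $f$ directly instead of passing through the value at the vertex).
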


\begin{proof} Let  $\alpha > 0$ and $\ve > 0$. Denoting $t = \core^\alpha(x,f)$, there is $S \in \cB_\alpha$ with $x \in S$ such that 
$$
\forall y \in S,~f(y) \geq t - \ve~~~\text{and}~~~\forall y \in \partial S,~f(y) \geq 2t - 2\ve.
$$
Let $H$ be the subgraph of $\cG_r(x,\cX_n)$ with vertices in $S$, and let $\deg_H(s)$ be the degree of a vertex $s \in S$ in $H$. If $s$ is at distance more than $r$ from $\partial S$, then, using again $\eta$ introduced in the proof of \lemref{lower} at \eqref{etalower},
$$
\deg_H(s) = n\times P_n(\ball(s,r)) - 1\geq N (f(s) - \omega_f(r) -  \eta) - 1 \geq N(t - \ve - \omega_f(r) -  \eta) - 1. 
$$
Now if $s$ is at distance less that $r$ than $\partial S$, we can take $y \in S$ such that $s \in \ball(y,\alpha) \subset S$. The volume of $\ball(s,r) \cap \ball(y,\alpha)$ is then at least $\omega r^d(1/2 - O(r/\alpha))$ according to \lemref{geo}. We thus have, 
\begin{align*} 
\deg_H(s) &= n\times P_n(S \cap \ball(s,r)) - 1
\\
&\geq  
n\times P_n(\ball(y,\alpha) \cap \ball(s,r))  - 1 
\\
&\geq N\(\frac{1}{\omega r^d} P(\ball(y,\alpha) \cap \ball(s,r)) - \eta\)-1 \\
&\geq N\(\(\frac12 - O(r/\alpha)\)(f(s) - \omega_f(r)) - \eta\)-1 \\
&\geq N\(t - \ve - O(r/\alpha) - \omega_f(r) - \eta\)-1
\end{align*} 
where we used the fact that $f(s) \geq 2t - 2\ve - \omega_f(r)$ because $s$ is $r$-close to $\partial S$. We thus have shown here that
$$
\frac{\core_r(x,\cX_n)}{N} \geq t - \ve - O(r/\alpha) - \omega_f(r) - \eta-1/N
.
$$
Now letting $n \to \infty$ yields, almost surely,
$$
\liminf_{n \to \infty} \frac{\core_r(x,\cX_n)}{N} \geq t - \ve = \core^\alpha(x,f) - \ve~~~\text{uniformly in $x \in \bbR^d$.}
$$
and letting $\alpha, \ve \to  0$ yields the result.
\end{proof}

\section{Numerical Simulations}
\label{sec:numeric}

We performed some small-scale proof-of-concept computer experiments to probe into the convergences established earlier in the paper, as well as other questions of potential interest not addressed in this paper. 

\subsection{Illustrative Examples}
\label{sec:plots}

In the regime where $r = r_n \to 0$ and $nr^d \gg \log(n)$, Theorems~\ref{thm:degree_rfixed},~\ref{thm:H} and~\ref{thm:main} show that only $f(x)$ and $\core_0(x,f)$ can be obtained as limits of H-index iterates $\H_r^k(x,\cX_n)$, when $k \in \{0,1,\ldots,\infty\}$ is fixed.
Figures~\ref{fig:h-index-illustration-1d} and~\ref{fig:h-index-illustration-2d} both illustrate, for $d=1$ and $d=2$ respectively, the following convergence behavior:
\begin{itemize}
\item
$\frac1{N} \deg_r(x ,\cX_n) \xrightarrow[n\to\infty]{} f(x)$ (see \thmref{degree_rfixed});
\item
$\H_r^k(x,\cX_n)\xrightarrow[k\to\infty]{} \core_r(x,\cX_n)$ (see \eqref{eq:coreness-hindex});
\item
$\frac1{N} \core_r(x,\cX_n) \xrightarrow[n\to\infty]{} \core_0(x,f)$ (see \thmref{main}).
\end{itemize}
The density functions have been chosen to exhibit non-trivial super-level sets, so that $\core_0(\cdot,f) \neq f/2$ (see \prpref{homology}).

\begin{figure}[!htbp]
\centering
\begin{subfigure}{1\textwidth}
	\centering
	\includegraphics[width = 0.9\linewidth]{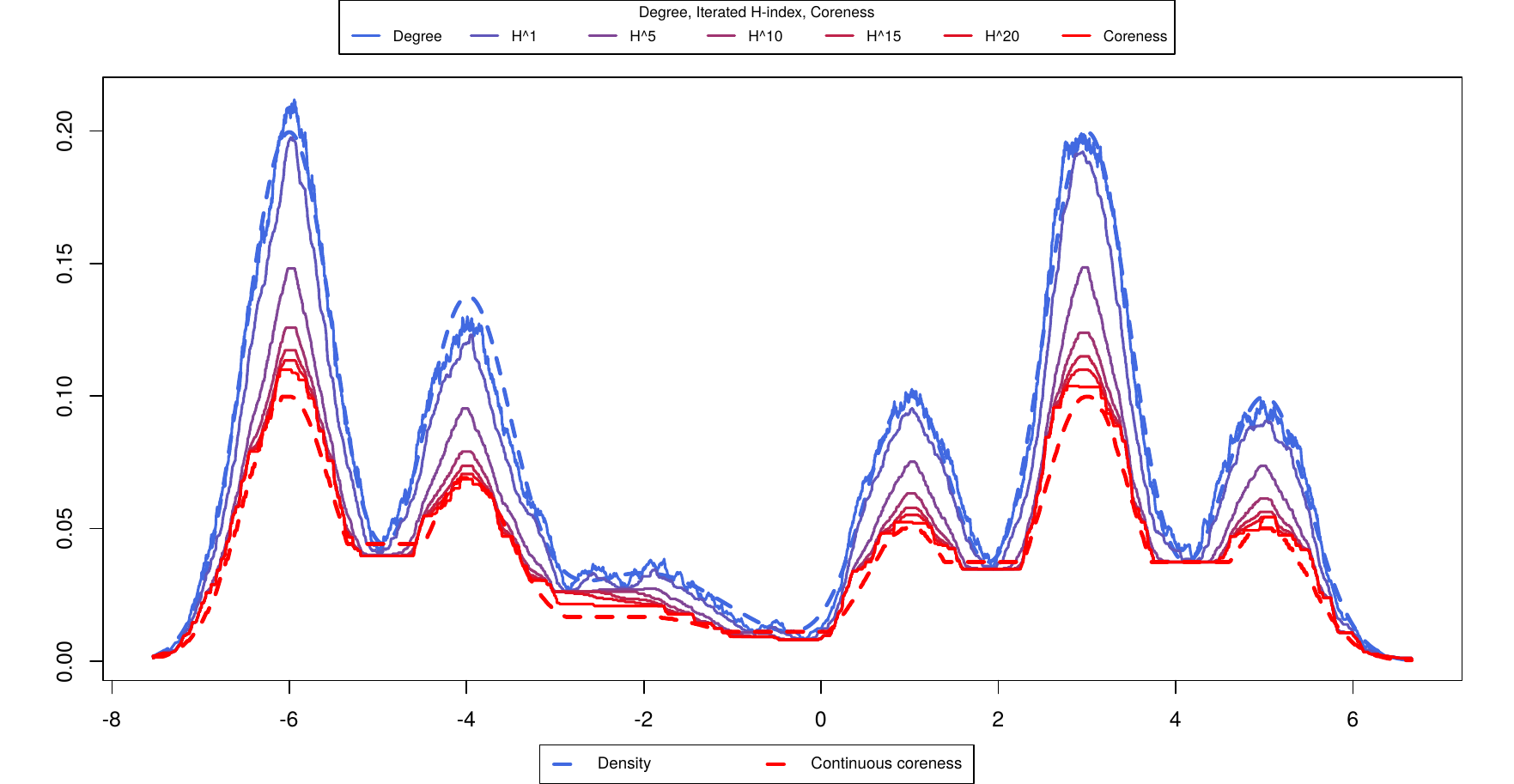}
\caption{
The mixture $f$ of six Gaussians in dimension $d=1$ from \figref{c0-sketch} sampled $n = 10 000$ times. On the discrete side (solid), are displayed the degree ($k=0$), iterated H-indices for $k \in \{1,5,10,15,20\}$, and coreness $(k=\infty$). On the continuous side (dashed), the density $f$ and the continuum coreness $\core_0(\cdot,f)$ are plotted.
Here, $r \approx 0.13$ was picked proportional to the optimal kernel bandwidth $r_\opt \asymp n^{-1/(d+2)} = n^{-1/3}$.
}
\label{fig:h-index-illustration-1d}
\end{subfigure}

\begin{subfigure}{1\textwidth}
	\centering
\includegraphics[width = 0.9\linewidth]{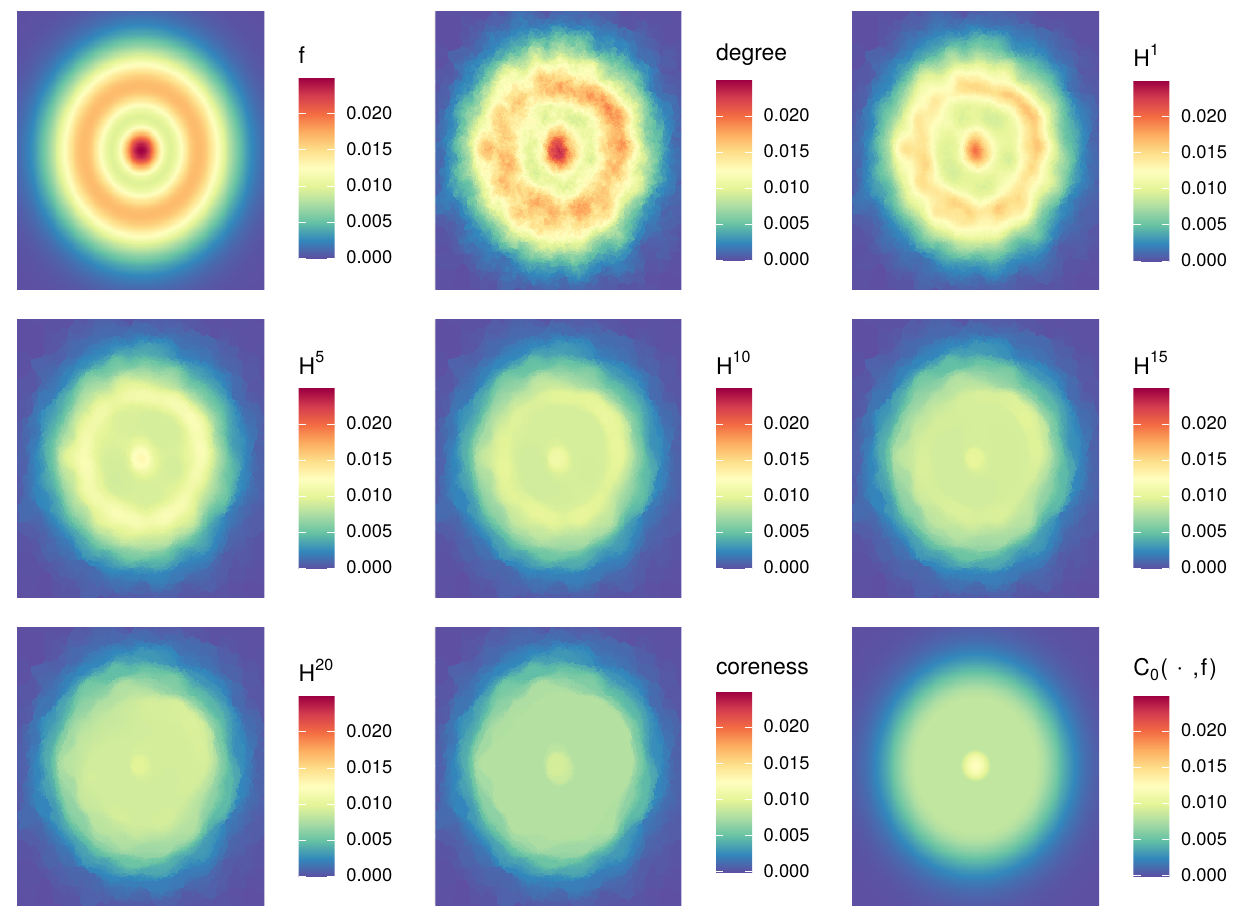}
\caption{A plot similar to \figref{h-index-illustration-1d} for $d = 2$. The generating density function $f$ exhibits a crater-like shape enclosing a peak, yielding a continuum coreness $\core_0(\cdot,f)$ that plateaus, and in particular differs from $f/2$ within the crater area. Here, $n = 20000$, $k \in \{0,1,5,10,15,20,\infty\}$ and $r \asymp r_\opt \asymp n^{-1/(d+2)} = n^{-1/4}$.
}
\label{fig:h-index-illustration-2d}
\end{subfigure}
\caption{Illustrative examples in dimension $d=1$ and in dimension $d=2$.}
\label{fig:h-index-illustration}
\end{figure}

\subsection{Convergence Rates}
\label{sec:rates}

Intending to survey limiting properties of the degree, the H-index and the coreness, the above work does not provide convergence rates.
We now discuss them numerically in the regime where $r \to 0$.

A close look at the proofs indicates that only bias terms of order $O(r \vee \omega_f(r))$ appear in the convergences of Theorems~\ref{thm:H} and~\ref{thm:main}.
For the degree, the stochastic term is known to be of order \smash{$O\bigl(1/\sqrt{nr^d}\bigr)$}.
If $f$ is Lipschitz (i.e., $\omega_f(r) = O(r)$), the bandwidth $r_\opt$ that achieves the best minimax possible convergence rate in \thmref{degree_rfixed} is $r_\opt = O(n^{-1/(d+2)})$, yielding a pointwise  error $|N^{-1}\deg_r(x, \cX_n) - f(x)| = O(r_\opt) = O(n^{-1/(d+2)})$.
Naturally, larger values $r \geq r_\opt$ make the bias term lead, and smaller values $r \leq r_\opt$ make the stochastic term lead.
Although it remains unclear how bias terms behave for H-indices and the coreness, simulations indicate a similar bias-variance tradeoff depending on $n$ and $r$. 
Indeed, the sup-norms $\|N^{-1}\deg_r(\cdot, \cX_n) - f\|_\infty$ and $\|N^{-1}\core_r(\cdot, \cX_n) - \core_0(\cdot,f)\|_\infty$ appear to be linearly correlated (see \figref{scatterplot-convergence-rate}).
\begin{figure}[!htbp]
\centering
\begin{subfigure}{0.49\textwidth}
	\centering
	\includegraphics[width = 1\linewidth]{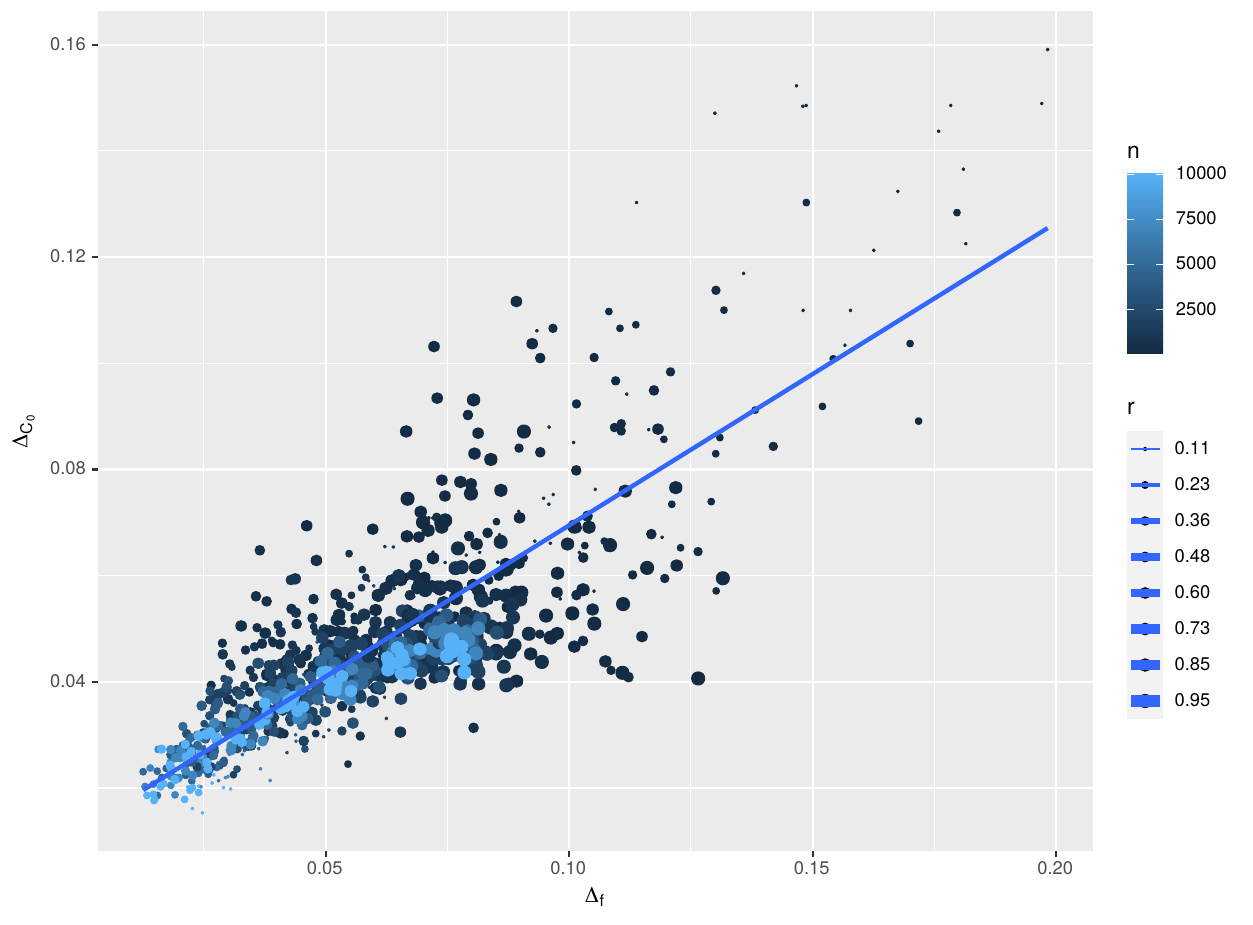}
	\caption{Distribution of \figref{h-index-illustration-1d} ($d=1$).}
	\label{fig:subfig_scatterplot1d}
\end{subfigure}
\begin{subfigure}{0.49\textwidth}
	\centering
	\includegraphics[width = 1\linewidth]{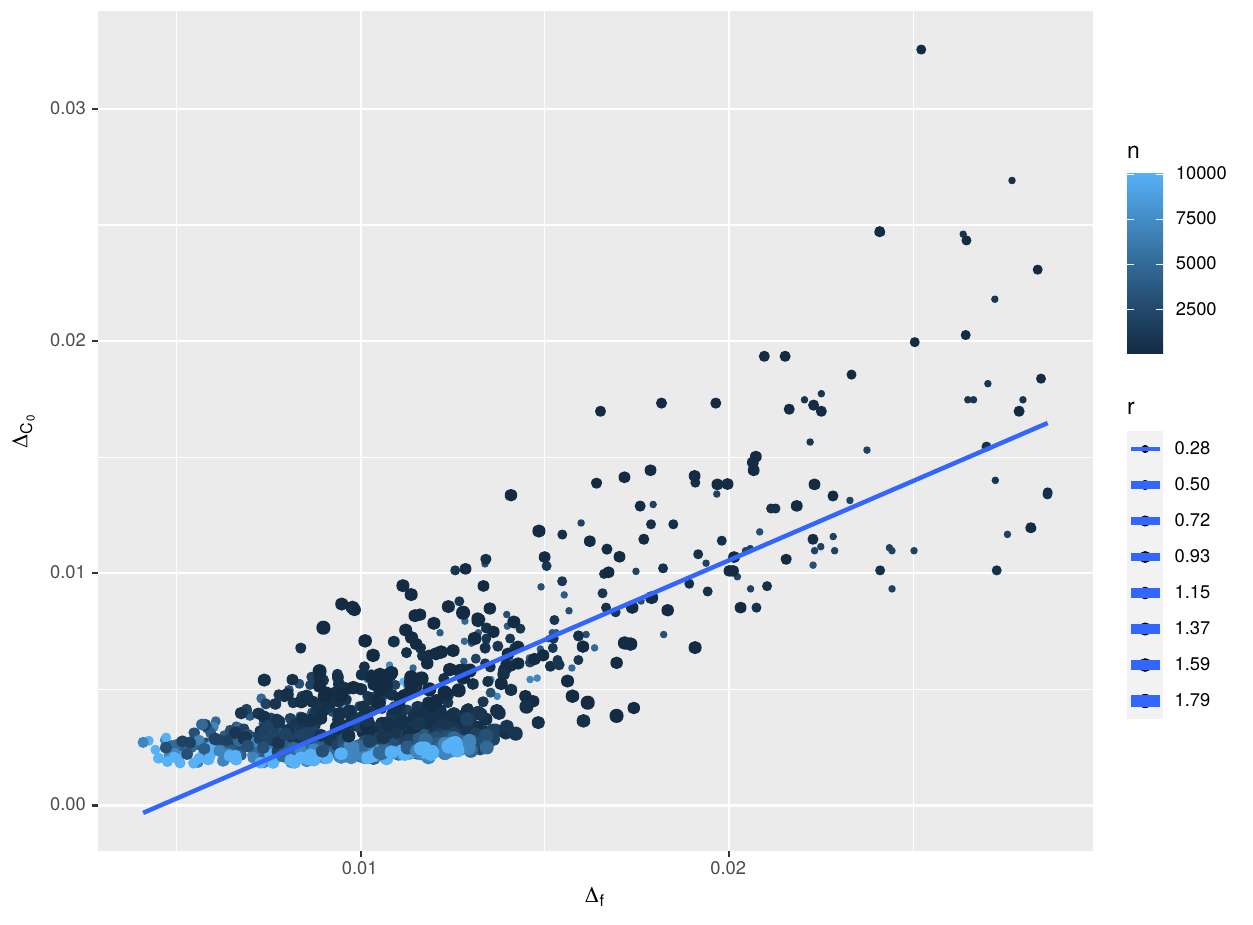}
	\caption{Distribution of \figref{h-index-illustration-2d} ($d=2$).}
	\label{fig:subfig_scatterplot2d}
\end{subfigure}
\caption{Scatterplot of values $\bigl(\|N^{-1}\deg_r(\cdot, \cX_n) - f\|_\infty,\|N^{-1}\core_r(\cdot, \cX_n) - \core_0(\cdot,f)\|_\infty\bigr)$ with data generated according to (\subref{fig:subfig_scatterplot1d}) the Gaussian mixture distribution depicted in \figref{h-index-illustration-1d}, and~(\subref{fig:subfig_scatterplot2d}) the crater-like density of \figref{h-index-illustration-2d}.
Sample size values $n$ take $9$ different values in $[100,10000]$, while connection radii $r$ take $8$ different values within the interval $[0.1,0.97]$ for (\subref{fig:subfig_scatterplot1d}) and $[0.27,1.80]$ for~(\subref{fig:subfig_scatterplot2d}).
For each pair $(n,r)$, simulations are repeated $10$ to $20$ times, depending on the value of $n$.
}
\label{fig:scatterplot-convergence-rate}
\end{figure}
As a result, with a choice $r \asymp r_\opt = O(n^{-1/(d+2)})$, we anticipate that with high probability,
\begin{align}\label{eq:conjecture-rate}
\notag
|N^{-1}\core_r(x, \cX_n) - \core_0(x)| 
&= 
O(|N^{-1}\deg_r(x, \cX_n) - f(x)|)
\hspace{1em}
\\
&= O(n^{-1/(d+2)})
,
\tag{Rate Conjecture}
\end{align}
Furthermore, \figref{scatterplot-convergence-rate} suggests that the slope relating the random variables $\|N^{-1}\deg_r(\cdot, \cX_n) - f\|_\infty$ and $\|N^{-1}\core_r(\cdot, \cX_n) - \core_0(\cdot,f)\|_\infty$ is of constant order, in fact between $1/2$ and $1$, which suggests very moderate constants hidden in the $O(|N^{-1}\deg_r(x, \cX_n) - f(x)|)$.

\subsection{Iterations of the H-Index}
\label{sec:k_max}
Seen as the limit \eqref{eq:coreness-hindex} of H-index iterations, the coreness $\core_r(x,\cX_n) = \H_r^\infty(x,\cX_n)$ raises computational questions.
One of them resides in determining whether it is reasonable to compute it naively, by iterating the H-index over the graph until stationarity at all the vertices.

More generally, given a graph $G=(V,E)$ and a vertex $v\in V$ of $G$ , and similarly as what we did in \secref{h-index} for random geometric graphs, we can study the H-index $\H_G(v)$, its iterations $\H^k_G(v)$ for $k \in \bbN$, and the coreness $\core_G(v)$. 
The \emph{max-iteration $k^\infty(G)$} of the H-index of $G$ is then defined as the minimal number of iterations for which the iterated H-index $\H_G^k$ coincides with the coreness $\core_G$. That is,
$$
k^\infty(G) := \min\big\{k \in \bbN~|~ \forall v \in V,~\core_G(v) = \H^k_G(v)\big\}.
$$
Known bounds for $k^\infty(G)$ are of the form
$$
k^\infty(G) \leq 1 + \sum_{v \in V} |\deg_G(v) - \core_G(v)|~~~~\text{and}~~~~ k^\infty(G) \leq |V|,
$$
and can be found in  \cite[Thm 4 \& Thm 5]{Mon13}. For random geometric graphs, this yields probabilistic bounds of order $O(n^2 r^d)$ and $O(n)$ respectively, with one or the other prevailing depending on whether we are in a sub-critical or super-critical regime.

However, for the random geometric graphs $\cG(x,\cX_n)$, numerical simulations suggest that an even stronger bound of order $k^\infty(\cG_r(x,\cX_n)) = O(nr^{d-1})$ may hold with high probability (see \figref{k_max}).
Indeed, in the regime where $r = r_n$ is large enough that $\cG_r(x,\cX_n)$ is connected, this latter quantity appears to coincide with its diameter --- which is of order $O(1/r)$ --- multiplied by its maximal degree --- which is of order $O(nr^d)$.

Coming back to the general deterministic case, this observation leads us to conjecture that
\begin{align}\label{eq:conjecture-k_max}
\tag{Max-Iter. Conjecture}
\hspace{4em}
k^\infty(G)
&\leq
\max_{\substack{H \subset G \\ \text{connected}}}
\diam(H)
\
\times
\max_{v \in V} \deg_G(v)
,
\hspace{6em}
\end{align}
where $\diam(H)$ is the diameter of $H$ seen a combinatorial graph (with edge weight $1$).
This conjecture, clearly satisfied in simulations (see \figref{k_max}), would shed some light --- if correct --- on the dependency of the H-index iteration process with respect to the graph's geometry.
\begin{figure}[!htbp]
\centering
\begin{subfigure}{0.49\textwidth}
	\centering
	\includegraphics[width = 1\linewidth]{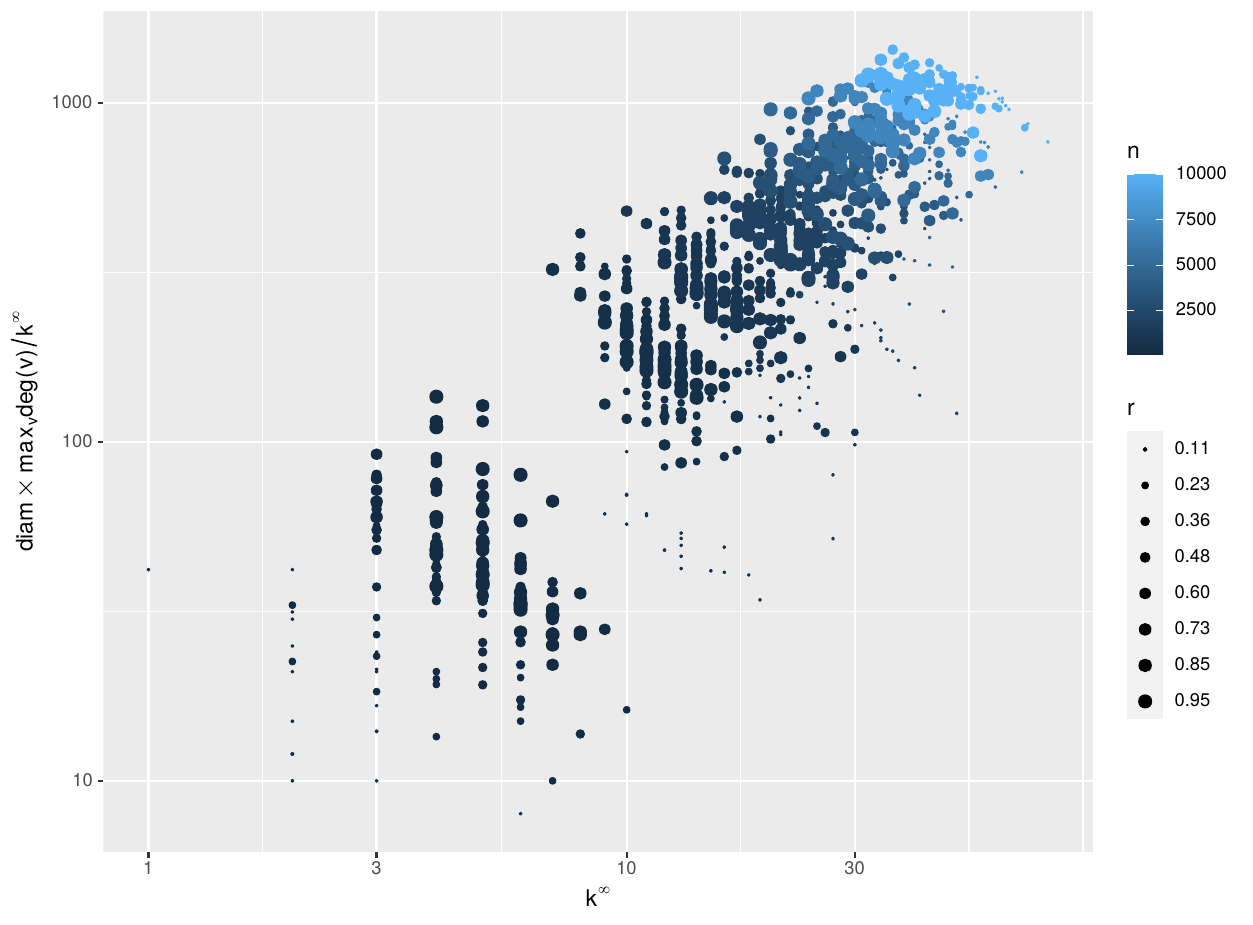}
	\caption{Distribution of \figref{h-index-illustration-1d} ($d=1$).}
	\label{fig:subfig_k_max_1d}
\end{subfigure}
\begin{subfigure}{0.49\textwidth}
	\centering
	\includegraphics[width = 1\linewidth]{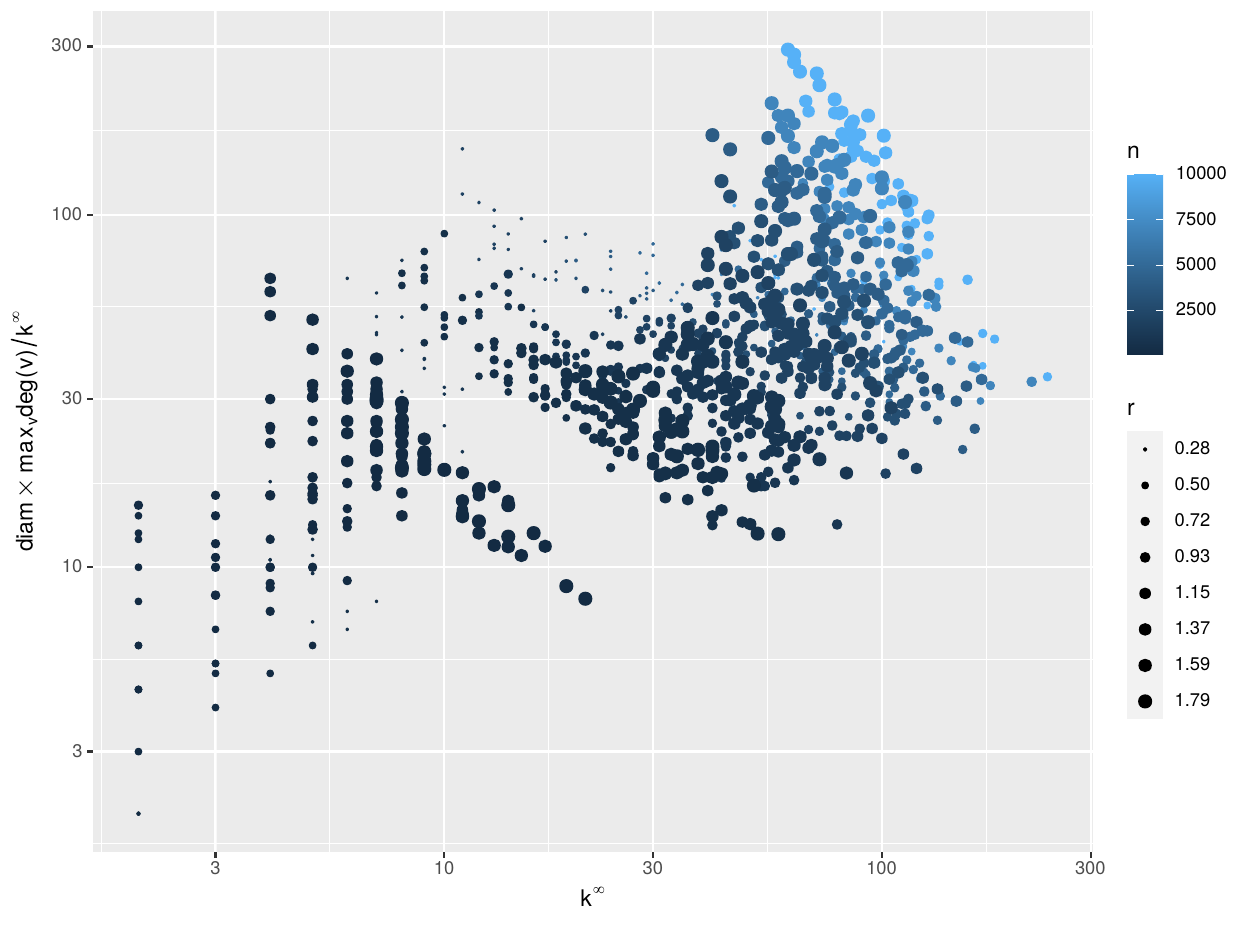}
	\caption{Distribution of \figref{h-index-illustration-2d} ($d=2$).}
	\label{fig:subfig_k_max_2d}
\end{subfigure}
\caption{Scatterplot of values $\bigl(k^\infty(\cG_r), \diam(\cG_r) \times \max_v \deg_{\cG_r}(v) /k^\infty(\cG_r) \bigr)$ in log-log scale, with data generated according to (\subref{fig:subfig_k_max_1d}) the Gaussian mixture distribution of \figref{h-index-illustration-1d}, and~(\subref{fig:subfig_k_max_2d}) the crater-like density of \figref{h-index-illustration-2d}.
Values all appear to satisfy $k^\infty(\cG_r) \leq \diam(\cG_r) \times \max_v \deg_{\cG_r}(v)$ widely (i.e., points with ordinate at least $1$ in these plots), even for small values of $r$ and $n$.
}
\label{fig:k_max}
\end{figure}

\section{Concluding Remarks and Discussions}
\label{sec:discussion}

\subsection{Generalizations}

Given a random neighborhood graph, we obtained limits for the centrality measures given by the degree, H-index, and coreness. 
In the same spirit, it would be interesting to study the limiting objects associated to other notions of centrality, such as the closeness centrality of~\cite{freeman1978centrality}, the betweenness of~\citep{freeman1977set}, and other `spectral' notions~\citep{katz1953new, bonacich1972factoring, page1999pagerank, kleinberg1999hubs}.
Similarly, we only focused on a $r$-ball neighborhood graph construction, but there are other graphs that could play that role, such as nearest-neighbor (possibly oriented) graphs or Delaunay triangulations, possibly yielding new limiting objects.

\subsection{Applications}
In the context of multivariate analysis, a notion of {\em depth} is meant to provide an ordering of the space~\citep[Property~1]{serfling2000general}.  While in dimension  one there is a natural order (the one inherited by the usual order on the real line), in higher dimensions this is lacking, and impedes the definition of such foundational objects as a median or other quantiles, for example.
While the focus in multivariate analysis is on point clouds, in graph and network analysis the concern is on relationships between nodes in a graph, with their importance in the graph being measured through various notions of centralities.

Thus, on the one hand, notions of depth have been introduced in the context of point clouds, while on the other hand, notions of centrality have been proposed in the context of graphs and networks, and these two lines of work seem to have evolved completely separately, with no cross-pollination whatsoever, at least to our knowledge.\footnote{The only place where we found a hint of that is in the discussion of \cite{aloupis2006geometric}, who mentions a couple of ``graph-based approach[es]'' which seem to have been developed for the context of point clouds, although one of them --- the method of \cite{toussaint1979some} based on pruning the minimum spanning tree --- applies to graphs as well.}
This lack of interaction may appear surprising, particularly in view of the important role that neighborhood graphs have played in multivariate analysis, for example, in areas like manifold learning \citep{tenenbaum2000global, weinberger05nonlinear, belkin2003laplacian}, topological data analysis \citep{wasserman2016topological, chazal2011geometric}, and clustering~\citep{ng2002spectral, arias2011clustering, maier2009optimal, brito1997connectivity}.

At this stage, we do not know whether graph-based notions of depth offer promise in regards to methodology. The notions that we study in the present article either lead to the likelihood, which does not appear to be particularly popular, or to somewhat peculiar functions which --- as the expert in data depth will notice --- do not define notions of depths which are affine invariant in general, while affine invariance is one of the desired properties of a depth function~\citep[Property~1]{serfling2000general}.
We can, however, imagine a situation where a notion of depth derived from a notion of graph centrality may work well.
For instance, graph-based notions of depth may adapt to the intrinsic geometry of a low-dimensional sampled manifold, a context --- common in modern multivariate analysis --- in which the standard notions of depth are all doomed.

\subsection*{Acknowledgments}
This work was partially supported by the US National Science Foundation (DMS 1513465).
The work of EAC was partially supported by the US National Science Foundation (DMS 1916071).
The work of CB was supported by the Deutsche Foschungsgemeinschaft (German Research Foundation) on the French-German PRCI ANR ASCAI CA 1488/4-1 “Aktive und Batch-Segmentierung,
Clustering und Seriation: Grundlagen der KI”.
We are grateful to Charles Arnal for insightful comments.

\bibliographystyle{chicago}
\bibliography{ref}

\end{document}